\newtheorem{theorem}{Theorem}[section]
\newtheorem{lemma}[theorem]{Lemma}
\newtheorem{remark}[theorem]{Remark}
\theoremstyle{definition}
   \long\def\comment#1{}
\def\ad#1{\begin{aligned}#1\end{aligned}}  
\def\a#1{\begin{align*}#1\end{align*}} \def\an#1{\begin{align}#1\end{align}}
\def\e#1{\begin{equation}#1\end{equation}} \def\d{\operatorname{div}}
\def\p#1{\begin{pmatrix}#1\end{pmatrix}} 
  \numberwithin{equation}{section}
\numberwithin{table}{section}
\numberwithin{figure}{section}
\def\div{\operatorname{div}}
\def\boxit#1{\vbox{\hrule height1pt \hbox{\vrule width1pt\kern1pt
     #1\kern1pt\vrule width1pt}\hrule height1pt }}
 \def\lab#1{\boxit{\small #1}\label{#1}}
  \def\mref#1{\boxit{\small #1}\ref{#1}}
 \def\meqref#1{\boxit{\small #1}\eqref{#1}}
  \def\lab#1{\label{#1}} \def\mref#1{\ref{#1}} \def\meqref#1{\eqref{#1}}
\begin{document}
\newcommand{\bc}{\begin{center}}
\newcommand{\ec}{\end{center}}
\newcommand{\be}{\begin{eqnarray}}
\newcommand{\ee}{\end{eqnarray}}
\newcommand{\nn}{\nonumber}
\newcommand{\ben}{\begin{eqnarray*}}
\newcommand{\een}{\end{eqnarray*}}

\newcommand{\abs}[1]{\lvert#1\rvert}
\newcommand{\lr}[1]{\left\{#1\right\}}
\newcommand{\app}{\approx}
\newcommand{\prdd}[2]{{\frac{\partial #1}{\partial #2}}}
\newcommand{\ov}[1]{\overline{#1}}
\newcommand{\Vvert}{|\hskip-0.8pt|\hskip-0.8pt|}
\newcommand{\Om}{\Omega}
\newcommand{\al}{\alpha}
\newcommand{\de}{\delta}
\newcommand{\eps}{\varepsilon}
\newcommand{\ga}{\gamma}
\newcommand{\Ga}{\Gamma}
\newcommand{\ka}{\kappa}
\newcommand{\lam}{\lambda}
\newcommand{\om}{\omega}
\newcommand{\pa}{\partial}
\newcommand{\si}{\sigma}
\newcommand{\tri}{\triangle}
\newcommand{\ze}{\zeta}
\newcommand{\na}{\nabla}
\newcommand{\wt}{\widetilde}
\newcommand{\wh}{\widehat}

\def\x{\times}
\def\hK{\hat{K}}
\def\na{\nabla}
\def\dx{\,dx}
\def\dy{\,dy}
\def\ds{\,ds}

\def\cA{\mathcal{A}}
\def\cB{\mathcal{B}}
\def\cC{\mathcal{C}}
\def\cD{\mathcal{D}}
\def\cE{\mathcal{E}}
\def\cF{e}
\def\cJ{\mathcal{J}}
\def\cO{\mathcal{O}}
\def\cQ{\mathcal{Q}}
\def\cT{\mathcal{T}}
\def\R{\mathbb{R}}
\newcommand\bfa{\boldsymbol{a}}
\newcommand\bfn{\boldsymbol{n}}
\newcommand{\disp}{\displaystyle}
\newcommand\bx{\boldsymbol{x}}
\newcommand\bft{\boldsymbol{t}}
\newcommand{\To}{\longrightarrow}
\newcommand{\C}{\mathcal{C}}
\newcommand{\K}{\mathcal{K}}
\newcommand{\T}{\mathcal{T}}
\newcommand{\bq}{\begin{equation}}
\newcommand{\eq}{\end{equation}}
\long\def\comments#1{ #1}
\comments{ }

\title[Triangular prism and tetrahedral elements]{
Conforming mixed triangular prism and nonconforming mixed tetrahedral elements for the linear elasticity problem
}


\author {Jun Hu}
\address{LMAM and School of Mathematical Sciences, Peking University,
  Beijing 100871, P. R. China.  hujun@math.pku.edu.cn}

\author{Rui Ma}
\address{LMAM and School of Mathematical Sciences, Peking University,
  Beijing 100871, P. R. China. maruipku@gmail.com}
\thanks{The first author was supported by  NSFC
projects 11271035, 91430213 and 11421101}

\begin{abstract}
We propose two families of mixed finite elements for solving the classical Hellinger-Reissner mixed problem of the linear elasticity equations in three dimensions. First, a family of conforming mixed triangular prism elements is constructed by product of elements on triangular meshes and elements in one dimension. The well-posedness is established for all elements with $k\geq1$, which are of $k+1$ order convergence for  both the stress and displacement. Besides, a family of reduced stress spaces is proposed by dropping the degrees of polynomial functions associated with faces. As a result, the lowest order  conforming mixed  triangular prism element has 93 plus 33 degrees of freedom on each element. Second, we construct a new family of nonconforming mixed   tetrahedral elements. The shape function spaces of our stress spaces are different from those of the elements in literature.

  \vskip 15pt

\noindent{\bf Keywords. }{mixed finite element,
   triangular prism element, linear elasticity, nonconforming tetrahedral element}

 \vskip 15pt

\noindent{\bf AMS subject classifications.}
    { 65N30, 73C02.}

\end{abstract}
\maketitle

\section{Introduction}
\label{sec:intro}
In the Hellinger-Reissner mixed formulation of the linear elasticity equations, it is a challenge
to design stable mixed finite element spaces mainly due to the symmetric constraint
of the stress tensor, see some earlier work for composite
elements and weakly symmetric methods in \cite{Amara-Thomas,Arnold-Brezzi-Douglas,Arnold-Douglas-Gupta,Johnson-Mercier,Stenberg-1,Stenberg-2,Stenberg-3}.  In \cite{Arnold-Winther-conforming}, Arnold and Winther designed the first family of mixed finite element methods in two dimensions, based on polynomial shape function spaces. The analogue of the results on tetrahedral meshes can be found in \cite{AdamsCockburn2005,Arnold-Awanou-Winther}, and rectangular and cuboid meshes in \cite{Arnold-Awanou-Rectangular,Awanou2012,Chen-Wang}.
Since the conforming symmetric stress elements have too many degrees of freedom, there are some other methods to overcome this drawback. We refer interest readers to nonconforming mixed elements, see \cite{Arnold-Awanou-Winther2014,ArnoldWinther2003,CaiYe,Gopalakrishnan2011,XieXu2011} on simplicial meshes,  and  \cite{Hu-Shi,ManHuShi2009,Yi2005,Yi2006} on rectangular and cuboid meshes.  For the weakly symmetric mixed finite element methods
for linear elasticity, we also refer to some recent work in \cite{ArnoldFalkWinther,Boffi-Brezzi-Fortin2009,Cockburn-Gopalakrishnan-Guzman,QiuDemkowicz2009}.

Recently, Hu and Zhang \cite{HuZhang2014a,HuZhang2015tre} and Hu \cite{Hu2015trianghigh} proposed a family of conforming mixed elements on simplical meshes for any dimension. This new class of elements has fewer degrees of freedom than those in the earlier literature. For $k\geq n$, the stress tensor is discretized  by $P_{k+1}$  finite element subspace of $H(\d)$ and the displacement by  piecewise $P_k$ polynomials. Moreover, a new idea is proposed to analyze the discrete inf-sup condition  and the basis functions therein are easy to obtain. For the case that $1\leq k\leq n-1$, the symmetric tensor spaces are enriched by proper high order $H(\d)$ bubble functions to stabilize the discretization in \cite{HuZhang2015trianglow}. Another method by stabilization technique to deal with this case can be found in \cite{ChenHuHuang2015}. We also refer to \cite{GongWuXu} for two types of interior penalty mixed finite element methods by using nonconforming symmetric stress spaces,  where the stability is established by introducing the conforming
$H(\d)$ bubble spaces from \cite{Hu2015trianghigh} and nonconforming face-bubble spaces. Corresponding mixed elements on both rectangular and cuboid meshes were constructed in \cite{Hu2015cuboid}, also see \cite{ChenSunZhao2015,Hu-Man-Zhang2014} for the lowest order mixed elements, while the simplest nonconforming mixed element on $n$-rectangular meshes can be found in \cite{Hu-Man-Wang-Zhang}.

In this paper, we first propose a family of conforming mixed triangular prism elements for the linear elasticity problem. Triangular prism meshes can deal with some columnar regions, and in this case, the triangular prism partition is more easily achieved than the tetrahedral partition.  The key idea here of constructing triangular prism elements is using a product structure that each prism can be treated as the product of a triangle and an interval. By dividing the stress variable into three parts, we construct the stress space through a combination of the mixed elasticity element \cite{Hu2015trianghigh,HuZhang2014a} and the Brezzi-Douglas-Marini element \cite{BrezziDouglas1986} on triangular meshes, and some other basic elements in one and two dimensions. In this way, we obtain conforming mixed triangular prism elements for any integer $k\geq1$. The stability analysis is established  by the  theory developed in \cite{Hu2015cuboid,Hu2015trianghigh,HuZhang2014a,HuZhang2015trianglow}. A family of reduced stress spaces is also proposed by dropping the degree of polynomials associated with faces. The reduced elements still preserve the same order of convergence. The lowest order  case has 93 plus 33 degrees of freedom on each element. In addition, by using the lowest order nonconforming mixed element in \cite{ArnoldWinther2003,Gopalakrishnan2011} on triangular meshes, we obtain a nonconforming mixed triangular prism element of first order convergence, of which  degrees of freedom are 81 plus 33.

Second, we propose a new family of nonconforming mixed tetrahedral elements. A family of nonconforming elements and its reduced elements  have been constructed on simplicial meshes in two and three dimensions in \cite{Gopalakrishnan2011}. As mentioned in \cite{Arnold-Awanou-Winther2014}, the reduced stress spaces in \cite{Gopalakrishnan2011} are not uniquely defined, but for each edge of the triangulation require a choice of  a favored endpoint of the edge. By introducing another shape function space,  a new nonconforming stress space was proposed in \cite{Arnold-Awanou-Winther2014} for the lowest order case, which needs some constraints on each edge. Here, we define the stress spaces with new shape function spaces for any $k\geq1$. Unlike the spaces in \cite{Arnold-Awanou-Winther2014,Gopalakrishnan2011}, our shape function spaces have explicit and unique forms. Similar nonconforming mixed elements are also proposed on triangular meshes for solving linear elasticity equations in two dimensions. Besides, we present the basis functions of the stress space for the lowest order element.

The rest of the paper is organized as follows. In Section \ref{sec:finite element}, we
define the  conforming mixed triangular prism finite element methods and present the basis functions.
 In Section \ref{sec:stability analysis},  we prove the well-posedness of these elements, i.e. the discrete coerciveness and the
    discrete inf-sup condition.
  By which,  the optimal order convergence of the
   new elements follows. In Section \ref{sec:reducedFEM}, we propose a family of reduced mixed triangular prism elements. In Section \ref{sec:tetrahedral element}, we propose the new nonconforming mixed tetrahedral elements. Besides, we present the basis functions of the stress space for the lowest order case. Similar results on triangular meshes are presented as well.
In the end, we provide some numerical results.
\section{The family of conforming mixed triangular prism elements}
\label{sec:finite element}
Based on the Hellinger-Reissner principle, the  linear elasticity
     problem within a stress-displacement ($\sigma$-$u$) form reads:
Find $(\sigma,u)\in\Sigma\times V :=H({\rm div},\Omega;
    \mathbb {S}=\hbox{symmetric } \mathbb{R}^{3\times 3})
        \times L^2(\Omega;\mathbb{R}^3)$, such that
\an{\left\{ \ad{
  &(A\sigma,\tau)+({\rm div}\tau,u)= 0 && \hbox{for all \ } \tau\in\Sigma,\\
   &({\rm div}\sigma,v)= (f,v) && \hbox{for all \ } v\in V. }
   \right.\lab{eqn1}
}
Here the symmetric tensor space for the stress $\Sigma$  and the
   space for the vector displacement  $V$ are, respectively,
  \an{   \lab{S}
  &H({\rm div},\Omega;\mathbb {S})
    := \big\{ \tau=\p{\tau_{11} & \tau_{12} &\tau_{13} \\
        \tau_{21}  & \tau_{22} & \tau_{23} \\
        \tau_{31} &\tau_{32}   & \tau_{33}  }
     \in H(\d, \Omega;\R^{3\times3}),  \ \tau ^{\rm T} = \tau  \big\}, \\
     \lab{V}
     &L^2(\Omega;\mathbb{R}^3) :=
     \{ v=(v_1\ \ v_2\ \ v_3)^{\rm T}\in L^2(\Omega;\R^3) \}  .}
	This paper denotes by $H^k(\omega;X)$ the Sobolev space consisting of
functions with domain $\omega$, taking values in the
finite-dimensional vector space $X$, and with all derivatives of
order at most $k$ square-integrable. For our purposes, the range
space $X$ will be either $\mathbb{S},$ $\mathbb{R}^3,$ $\mathbb{R}^2,$ or
$\mathbb{R}$, and in some cases, $X$ will be  $\mathbb{S}_2=\hbox{symmetric } \mathbb{R}^{2\times 2}$ as well.
Let $\|\cdot\|_{k,\omega}$ be the norm of $H^k(\omega)$ and  $H({\rm div},\omega;\mathbb{S})$
consist of square-integrable symmetric matrix fields with
square-integrable divergence. The $H({\rm div})$ norm is defined by
$$\|\tau\|_{H({\rm div},\omega)}^2:=\|\tau\|_{0,\omega}^2
   +\|{\rm div}\tau\|_{0,\omega}^2.$$
Let $L^2(\omega;X)$ be the space of functions
which are square-integrable.
Here, the compliance tensor
$A=A(\bx):\mathbb{S}~\rightarrow~\mathbb{S}$, characterizing the
properties of the material, is bounded and symmetric positive
definite uniformly for $\bx\in\Omega$.

This paper deals with a pure displacement problem \meqref{eqn1} with the
  homogeneous boundary condition that $u\equiv 0$ on
  $\partial\Omega$.
But the method and the analysis work for mixed boundary value problems
   and the pure traction boundary problem.
\subsection{The discrete stress and displacement spaces}
To obtain triangular prism partitions, we suppose that the domain $\Omega=\Omega_{xy}\times \Omega_{z}$, where $\Omega_{xy}$ is a polygon on the $(x,y)$-plane and $\Omega_{z}$ is an interval on the $z$-axis.  The domain $\Omega$ is subdivided into the union of non-overlapping shape-regular triangular prism elements such that the non-empty
intersection of any distinct pair of elements is a single common vertex, edge or face. Let $\mathcal{T}_h$ be the set consisting of all these elements (with the mesh size $h$). In fact, we also obtain partitions of $\Omega_{xy}$ and $\Omega_{z}$, which are denoted by $\mathcal{X}_h$ and $\mathcal{Z}_h$, respectively.
Given triangle $\Delta_{xy}\in\mathcal{X}_h$ and interval $\Delta_z\in \mathcal{Z}_h$, $K=\Delta_{xy}\times \Delta_z$ is thus a triangular prism element  in $\cT_h$. Then, each element $K$ in $\cT_h$ is equipped with a product structure. Given element, face or edge $\omega$, let $|\omega|$ denote the measure of $\omega$. Let ${\rm div}_{xy}$, $\nabla_{xy}$  and ${\rm curl}_{xy}$ denote the divergence,  gradient and curl operators with respect to  the variables $x$ and $y$, respectively. Given any integer $k$, let $P_k(\omega;X)$ denote the space of polynomials over $\omega$ of total degrees not greater than $k$, taking values in the finite-dimensional vector space X. Let $P_k(z)$ be the space of polynomials of degree not greater than $k$ with respect to the variable $z$, and let $P_k(x,y)$ be the space of polynomials of degree not greater than $k$ with respect to the variables $x$ and $y$. Given face $F$ of $K$ satisfying $F=e\times \Delta_z$, where $e\subset\partial\Delta_{xy}$, let $Q_{k_1,k_2}(F)=P_{k_1}(e;\R)\times P_{k_2}(\Delta_z;\R)$ for any integers $k_1$ and $k_2$.

    We define the following spaces associated with partition $\mathcal{Z}_h$ for $s=0,1$ and $k\geq s$
    \begin{equation*}
 \mathcal{L}^s_k(\mathcal{Z}_h):=\{\ v\in H^s(\Omega_{z};\R)\ |\ v|_{\Delta_z}\in P_k(z)\text{ for any }\Delta_z\in\mathcal{Z}_h\},
\end{equation*}
and the space associated with partition $\mathcal{X}_h$ for $k\geq 0$
    \begin{equation*}
 \mathcal{L}^0_k(\mathcal{X}_h):=\{v\in L^2(\Omega_{xy};\R)\ |\ v|_{\Delta_{xy}}\in P_k(x,y)\text{ for any }\Delta_{xy}\in\mathcal{X}_h\}.
\end{equation*}

Before defining the space for the stress, we introduce the mixed elasticity finite elements  in two dimensions of \cite{Hu2015trianghigh,HuZhang2014a} and the Brezzi-Douglas-Marini (BDM hereafter) spaces of \cite{BrezziDouglas1986} for the mixed Poisson problem. We recall some notations in \cite{Hu2015trianghigh,HuZhang2014a}. Let $\lam_i(1\leq i\leq 3)$ denote the barycentric coordinates with respect to the vertices $\bx_i$ of triangle $\Delta_{xy}$. For any edge $\bx_i\bx_j(1\leq i<j\leq 3)$ of $\Delta_{xy}$, let $\bft_{i,j}=\bx_j-\bx_i$ denote associated tangent vectors, which allow for us to introduce the following linearly independent symmetric matrices of rank one
    \begin{equation*}
      T_{i,j}=\bft_{i,j}\bft_{i,j}^T,\,1\leq i<j\leq 3.
    \end{equation*}
    With these symmetric matrices $T_{i,j}$ of rank one, we define a $H({\rm div}_{xy},\Delta_{xy};\mathbb{S}_{2})$ bubble function space
    \begin{equation*}
     H_{\Delta_{xy}, k,b}:=\sum_{1\leq i<j\leq 3}\lam_i\lam_jP_{k-2}(\Delta_{xy};\R)T_{i,j},
    \end{equation*}
    which satisfies
     \begin{equation*}
     H_{\Delta_{xy}, k,b}=\{\tau\in P_k(\Delta_{xy};\mathbb{S}_2)\ |\ \tau\nu_{xy}|_{\partial\Delta_{xy}}=0\}.
    \end{equation*}
    Here and throughout the paper, let  $H({\rm div}_{xy},\omega;X)$
consist of square-integrable functions over $\omega$ with values in $X$ and
square-integrable divergence with respect to $x$ and $y$. Here $X$ will be either $\mathbb{S}_2$ or $\R^2$. The finite element space of order $k$ ($k\ge 3$)
     for the stress approximation in two dimensions is
\an{\lab{Sh}
   H_{k,h} :=\Big\{~\tau
    &\in H({\rm div}_{xy},\Omega_{xy};\mathbb{S}_{2}) \ \Big| \ \tau=\tau_c+\tau_b,
             \ \tau_c\in H^1(\Omega_{xy}; \mathbb{S}_{2}), \\
     & \tau_c|_{\Delta_{xy}}\in P_k(\Delta_{xy}; \mathbb{S}_{2})
	        \,,
	\nonumber  \ \tau_b|_{\Delta_{xy}}\in H_{\Delta_{xy},k, b}  \text{ for any } \Delta_{xy}\in\mathcal{X}_h \Big\}. }
A matrix field $\tau\in P_k(\Delta_{xy};\mathbb{S}_2)$ can be uniquely determined by the following degrees of freedom \cite{Hu2015trianghigh}
\begin{itemize}
  \item[(1)] the values of $\tau$ at three vertices of $\Delta_{xy}$,
  \item[(2)] $\int_e\tau\nu_{xy} \cdot p\,ds$ for any $p\in P_{k-2}(e;\R^2)$ and $e\subset\partial \Delta_{xy}$,
  \item[(3)]$\int_{\Delta_{xy}}\tau:p\,dxdy$ for any $p\in H_{\Delta_{xy}, k,b}$.
\end{itemize}
Hereafter $\nu_{xy}$ is the normal vector of $\partial\Delta_{xy}$.

The spaces of the BDM element are defined as follows for $k\geq 1$
\begin{equation*}
\begin{split}
 {\rm BDM}_k:=&\{\tau\in H({\rm div}_{xy},\Omega_{xy};\mathbb{R}^2)\ |\ \tau|_{\Delta_{xy}}\in P_k(\Delta_{xy};\mathbb{R}^2)\text{ for any }\Delta_{xy}\in\mathcal{X}_h\}.
  \end{split}
\end{equation*}
The vector-valued function $\tau\in P_k(\Delta_{xy};\R^2)$ can be determined by the following conditions (see e.g. \cite{Boffi2013}):
\begin{itemize}
  \item[(1)] $\int_e\tau\cdot\nu_{xy} p\,ds$ for any $p\in P_k(e;\R)$ and $e\subset\partial \Delta_{xy}$,
\item[(2)] $\int_{\Delta_{xy}}\tau\cdot\nabla_{xy} p\,dxdy$ for any $p\in P_{k-1}(x,y)$,
\item[(3)]  $\int_{\Delta_{xy}}\tau\cdot p\,dxdy$ for any $p\in \Psi_k(\Delta_{xy})$
\end{itemize}
with
\begin{equation}\label{BDMbubtwo}
  \Psi_k(\Delta_{xy}):=
  \{w|w={\rm curl}_{xy}(b_{xy}v),v\in P_{k-2}(x,y)\},
\end{equation}
where $b_{xy}:=\lambda_1\lambda_2\lambda_3$ denotes the cubic bubble on $\Delta_{xy}$. We also introduce the bubble function space
\begin{equation*}
  {\rm BDM}_{\Delta_{xy},k,b}:=\{\tau\in P_k(\Delta_{xy};\R^2)\ |\ \tau\cdot\nu_{xy}|_{\partial\Delta_{xy}}=0\}.
\end{equation*}
This space can be uniquely determined by the  conditions in (2) and (3) above.

Based on the above finite element spaces,  we use a product structure to define the stress space of the conforming mixed triangular prism elements for $k\geq 1$:
\begin{equation}\label{stressspace}
\begin{split}
\Sigma_{k,h}:=&\{\tau=\begin{pmatrix}
                     \tau_{11} &  \tau_{12} &  \tau_{13} \\
                     \tau_{21} &  \tau_{22}&  \tau_{23}\\
                      \tau_{31} &  \tau_{32} &  \tau_{33} \\
                   \end{pmatrix}\in L^2(\Omega;\mathbb{S})\ \big|\ \begin{pmatrix}
                     \tau_{11} &  \tau_{12} \\
                     \tau_{21} &  \tau_{22}\\
                   \end{pmatrix}\in H_{k+2,h}\times \mathcal{L}^0_k(\mathcal{Z}_h),\\
                   &(\tau_{13},\tau_{23})^T\in {\rm BDM}_{k+1}\times  \mathcal{L}_{k+1}^1(\mathcal{Z}_h),\tau_{33}\in \mathcal{L}^0_k(\mathcal{X}_h)\times \mathcal{L}_{k+2}^1(\mathcal{Z}_h)\}.
\end{split}
\end{equation}
It is straightforward to show that $\Sigma_{k,h}\subset\Sigma$ and the shape function space of the element is
\begin{equation}\label{stresselement}
\begin{split}
\Sigma_{k}&(K):=\{\tau=\begin{pmatrix}
                     \tau_{1} &  \tau_{2}  \\
                     \tau_{2}^T &  \tau_{3}\\
                   \end{pmatrix}\in H^1(K;\mathbb{S})\ \big|\ \tau_1:=\begin{pmatrix}
                     \tau_{11} &  \tau_{12} \\
                     \tau_{21} &  \tau_{22}\\
                   \end{pmatrix}\in P_{k+2}(\Delta_{xy};\mathbb{S}_{2})\times P_k(z),\\
                   &\tau_2:=(\tau_{13},\tau_{23})^T\in P_{k+1}(\Delta_{xy};\R^2)\times  P_{k+1}(z),\tau_3:=\tau_{33}\in P_k(x,y)\times P_{k+2}(z)\}.
\end{split}
\end{equation}

 Note that $\nu_{xy}$ is the normal vector of $\partial\Delta_{xy}$ and thus defined on $\partial\Delta_{xy}$, then it is also well defined on each face $F$ of $K$ that parallels the $z$-axis and each edge $e$ of $K$ that parallels the $(x,y)$-plane. We present the degrees of freedom in the following lemma.

\begin{lemma}\label{degreeoffreedom} A matrix field $\tau\in\Sigma_k(K)$ can be uniquely determined by the following conditions:
\begin{itemize}
  \item[(1)] the values of $\tau_1$ at $k+1$ distinct points on edge $e$ of $K$ that parallels the $z$-axis,
  \item[(2)] $\int_{F}\tau_1\nu_{xy}\cdot p\,dF$ for any $p\in Q_{k,k}(F)$ and face $F$  of $K$ that parallels the $z$-axis,
  \item[(3)] $\int_K\tau_1:p\,dxdydz$ for any $p\in H_{\Delta_{xy},k+2,b}\times P_k(z)$;
\item[(4)] $\int_e\tau_2\cdot\nu_{xy}p\,ds$ for any $p\in P_{k+1}(e;\R)$ and edge $e$ of $K$ that parallels the $(x,y)$-plane,
    \item[(5)] $\int_{F}\tau_2\cdot\nu_{xy}p\,ds$ for any $p\in Q_{k+1,k-1}(F)$ and face $F$  of $K$ that parallels the $z$-axis,
\item[(6)] $\int_{F}\tau_2\cdot\nabla_{xy} p\,dxdy$ for any $p\in P_{k}(x,y)$ and face $F$ that parallels the $(x,y)$-plane,
    \item[(7)] $\int_{F}\tau_2\cdot p\,dxdy$ for any $p\in \Psi_{k+1}(\Delta_{xy})$ and face $F$ that parallels the $(x,y)$-plane,
\item[(8)] $\int_K\tau_2\cdot\nabla_{xy}p\,dxdydz$ for any $p\in P_{k}(x,y)\times P_{k-1}(z)$,
\item[(9)]  $\int_K\tau_2\cdot p\,dxdydz$ for any $p\in \Psi_{k+1}(\Delta_{xy})\times P_{k-1}(z)$;
\item[(10)] $\int_{F}\tau_3p\,dxdy$ for any $p\in P_k(x,y)$ and face $F$ that parallels the $(x,y)$-plane,
  \item[(11)] $\int_K\tau_3p\,dxdydz$ for any $p\in P_k(x,y)\times P_k(z)$.
\end{itemize}
Here $\tau_1,\tau_2$ and $\tau_3$ are defined in \eqref{stresselement}.
\end{lemma}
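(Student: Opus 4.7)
The plan is the standard dimension-plus-vanishing argument: first match the number of (linearly independent) conditions in (1)--(11) against $\dim\Sigma_k(K)$ block by block, then show that any $\tau\in\Sigma_k(K)$ annihilating every functional in (1)--(11) must be identically zero. The product form \eqref{stresselement} decouples the argument, since (1)--(3) involve only $\tau_1$, (4)--(9) only $\tau_2$, and (10)--(11) only $\tau_3$. In every block I plan to reduce to the 2D unisolvence of the Hu--Zhang element recorded after \eqref{Sh} and of the BDM element, by combining point/moment information in $z$ with the tensor-product structure of the shape space. Denote by $b_z(z)$ the quadratic polynomial on $\Delta_z$ vanishing at the two endpoints; it has constant sign on $\Delta_z$, which is the mechanism that closes every signed-weight step below.

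For $\tau_1\in P_{k+2}(\Delta_{xy};\mathbb{S}_2)\otimes P_k(z)$ I fix $z$ and verify the three 2D Hu--Zhang conditions at that slice. The $k+1$ vertex samples from (1), on each of the three vertical edges, force $\tau_1(\bx_i,\cdot)\in P_k(z)$ to vanish identically at each triangle vertex $\bx_i$. For the edge and interior moments in (2) and (3), the key observation is that, once an $(x,y)$-test function is fixed, the resulting moment is a polynomial in $z$ of degree at most $k$ whose full $P_k(z)$-moments all vanish; hence it is the zero polynomial, so the 2D edge/interior moments of $\tau_1(\cdot,\cdot,z)$ vanish at every $z$. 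Two-dimensional unisolvence of the Hu--Zhang element then yields $\tau_1\equiv 0$.

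The $\tau_2$ block is the main obstacle, because $\tau_2$ carries two different kinds of boundary trace that must be dismantled in order. First I kill the normal trace on each vertical face $F=e\times\Delta_z$: viewing $\tau_2\cdot\nu_{xy}|_F\in P_{k+1}(e)\otimes P_{k+1}(z)$, condition (4) on the two horizontal edges of $F$ forces this trace to vanish at the two endpoints of $\Delta_z$, hence to carry a factor $b_z$; testing (5) against the surviving factor in $P_{k+1}(e)\otimes P_{k-1}(z)$ produces a weighted $L^2$ identity with weight $b_z$ of constant sign, so the surviving factor also vanishes. Consequently every slice $\tau_2(\cdot,\cdot,z)$ lies in the BDM bubble space ${\rm BDM}_{\Delta_{xy},k+1,b}$. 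Applying (6)--(7) at the top and bottom horizontal faces, which coincide there with the 2D BDM bubble DOFs, forces $\tau_2$ to vanish on both horizontal faces, so $\tau_2=b_z\tilde\tau_2$ with $\tilde\tau_2\in P_{k+1}(\Delta_{xy};\R^2)\otimes P_{k-1}(z)$. The polynomial-in-$z$ argument used for $\tau_1$, now applied to (8) and (9) after peeling off the $b_z$ factor and again exploiting its constant sign, kills the 2D BDM bubble DOFs of $\tau_2(\cdot,\cdot,z)$ for every $z$, and 2D BDM unisolvence in the bubble space closes this block.

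For $\tau_3\in P_k(x,y)\otimes P_{k+2}(z)$, condition (10) at the top and bottom horizontal faces forces $\tau_3$ to vanish there, so $\tau_3=b_z\tilde\tau_3$ with $\tilde\tau_3\in P_k(x,y)\otimes P_k(z)$; testing (11) against separable test functions $p(x,y)q(z)$ and invoking the signed-weight trick once more gives $\tilde\tau_3=0$. A block-by-block count then exactly matches $\dim\Sigma_k(K)$, completing the proof.
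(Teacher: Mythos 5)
Your proposal is correct and follows essentially the same route as the paper: a dimension count combined with showing that vanishing of all the functionals forces $\tau=0$, handled block-by-block for $\tau_1,\tau_2,\tau_3$ by exploiting the tensor-product structure to reduce each block to the two-dimensional unisolvence of the Hu--Zhang and BDM elements. Your write-up merely makes explicit the mechanisms the paper leaves implicit (the $z$-slicing of the moments and the constant sign of the one-dimensional bubble $b_z$ in the weighted $L^2$ steps for conditions (5), (8)--(9) and (11)).
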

\begin{proof}
Since the dimensions of the space $\Sigma_k(K)$ are equal to the number of these conditions, it suffices to prove that  $\tau\equiv0$ if these conditions vanish.  The first and second conditions show that $\tau_1\nu_{xy}=0$ on side faces of triangular prism $K$. Moreover it follows from (3) that $\tau_1=0$.  Note that (4) plus (5) and (4), (6) plus (7) yield that $\tau_2\cdot\nu_{xy}=0$ on side faces and $\tau_2=0$ on top and bottom faces, respectively. Thus it follows from (8) and (9) that $\tau_2=0$.
It remains to prove $\tau_3=0$. Actually, condition (10) implies that
\begin{equation*}
  \tau_3=b_zg,
\end{equation*}
where $b_z$ is the quadratic bubble function on interval $\Delta_z$ and $g\in P_k(x,y)\times P_{k}(z)$.  Using condition (11), we immediately obtain  $\tau_3=0$.
\end{proof}
On each element $K$, the space for the displacement is taken as
 \an{ \lab{VK}
 V_k(K):=&\{v=(v_1,v_2,v_3)^T\in H^1(K;\R^3)\ |\ v_i\in P_{k+1}(x,y)\times P_k(z),i=1,2,\\
\nonumber &v_3\in P_k(x,y)\times P_{k+1}(z) \}.
    }
Then the global space for displacement reads
 \an{ \lab{Vh}
 V_{k,h}:=&\{v\in V\ |\ v|_K\in V_k(K)\text{ for any }K\in \cT_h\}.
    }

The mixed finite element approximation of Problem \eqref{eqn1}  reads:
Find $(\sigma_h,u_h)\in\Sigma_{k,h}\times V_{k,h}$, such that
\an{\left\{ \ad{
  &(A\sigma_h,\tau)+({\rm div}\tau,u_h)= 0 && \hbox{for all \ } \tau\in\Sigma_{k,h},\\
   &({\rm div}\sigma_h,v)= (f,v) && \hbox{for all \ } v\in V_{k,h}. }
   \right.\lab{diseqn1}
}
\subsection{Basis functions of the stress space}
For convenience, we provide the basis of the stress space $\Sigma_{k,h}$ on element $K$.
In fact, we only need to give the basis of $H_{k,h}(k\geq 3)$ and ${\rm BDM}_{k}(k\geq 1)$. Thus we immediately obtain the basis of $\Sigma_{k,h}$ by the product structure. For any edge $\bx_i\bx_j(1\leq i<j\leq 3)$ of $\Delta_{xy}$, $\bx_m$ being the opposite vertex, let $\boldsymbol{\nu}_{i,j}$ denote its associated normal vector and   $h_m$ denote the height of the triangle from $\bx_m$ to the opposite edge $\bx_i\bx_j$.

For $H_{k,h}(k\geq 3)$, the basis functions can be found in \cite{HuZhang2014a,HuZhang2015trianglow}.
 The canonical basis of $\mathbb{S}_2$ reads

 \begin{equation*}
   T_1=\begin{pmatrix}
         1 & 0 \\
         0& 0 \\
       \end{pmatrix},\;T_2=\begin{pmatrix}
         0 & 1 \\
         1& 0 \\
       \end{pmatrix},\;T_3=\begin{pmatrix}
          0 & 0 \\
         0& 1 \\
       \end{pmatrix}.
\end{equation*}
Then the basis functions on triangle $\Delta_{xy}$ are as follows:
\begin{itemize}
   \item[(1)] Given vertex $\bx_i$
\begin{equation*}
  \lambda_iT_j,\,j=1,2,3;
\end{equation*}
                 \item[(2)] Given  edge $\bx_i\bx_j$, its associated basis functions with nonzero fluxes read
\begin{equation*}
  \lambda_i\lambda_j\widetilde{P}_{k-2}(\lambda_i,\lambda_j)\boldsymbol{\nu}_{i,j}\boldsymbol{\nu}_{i,j}^T,\, \lambda_i\lambda_j\widetilde{P}_{k-2}(\lambda_i,\lambda_j)\frac{\boldsymbol{t}_{i,j}
  \boldsymbol{\nu}_{i,j}^T+\boldsymbol{\nu}_{i,j}\boldsymbol{t}_{i,j}^T}{2};
\end{equation*}
                 \item[(3)] The basis functions of $H_{\Delta_{xy},k,b}$ are
\begin{equation*}
  \lambda_i\lambda_jP_{k-2}(\Delta_{xy};\R)\boldsymbol{t}_{i,j}\boldsymbol{t}_{i,j}^T,\;1\leq i<j\leq 3.
\end{equation*}
               \end{itemize}
Here
\begin{equation}\label{def:bary}
\widetilde{P}_{k}(\lambda_i,\lambda_j):={\rm span}\{\lambda_i^{m_1}\lambda_j^{m_2},m_1+m_2=k\}.
\end{equation}

For ${\rm BDM}_{k}$, the hierarchical basis functions can be found in \cite{XinCaiGuo2013}. We give another basis functions following \cite{Boffi2013}: \begin{itemize}
                                 \item[(1)]  Given edge $\bx_{i}\bx_{j}$,
\begin{equation*}
  \frac{1}{h_m}\lambda_i\bft_{m,i},\,  \frac{1}{h_m}\lambda_j\bft_{m,j},\,
  \frac{1}{2h_m}\lambda_i\lambda_j\widetilde{P}_{k-2}(\lambda_i,\lambda_j)(\bft_{m,i}+\bft_{m,j});
\end{equation*}
                                 \item[(2)] The basis functions of ${\rm BDM}_{\Delta_{xy},k,b}$ are
\begin{gather*}
 \lambda_i\lambda_j\widetilde{P}_{k-2}(\lambda_i,\lambda_j)\bft_{i,j},\;1\leq i<j\leq 3,\\
  \lambda_1\lambda_2\lambda_3P_{k-3}(\Delta_{xy};\R^2).
\end{gather*}
\end{itemize}

Using the above two families of basis functions, we can easily construct the basis functions of $\Sigma_{k,h}(k\geq 1)$ on element $K=\Delta_{xy}\times\Delta_z$ by the product technique. We shall make explicit the lowest order case, of which the stress space is as follows
\begin{equation*}
\begin{split}
\Sigma_{1,h}=&\{\tau=\begin{pmatrix}
                     \tau_{1} &  \tau_{2}  \\
                     \tau_{2}^T &  \tau_{3} \\
                   \end{pmatrix}\in L^2(\Omega;\mathbb{S})\ \big|\ \tau_1\in H_{3,h}\times \mathcal{L}^0_1(\mathcal{Z}_h),
                   \tau_2\in {\rm BDM}_{2}\times  \mathcal{L}_{2}^1(\mathcal{Z}_h),\\&\tau_{3}\in \mathcal{L}^0_1(\mathcal{X}_h)\times \mathcal{L}_{3}^1(\mathcal{Z}_h)\}.
\end{split}
\end{equation*} Let $\{\phi_i\}_{i=1}^{30}$  and $\{\psi_i\}_{i=1}^{12}$ be the collection of basis functions of $H_{3,h}$  and ${\rm BDM}_{2} $ on triangle $\Delta_{xy}$, respectively. Suppose that $\Delta_z=[z_0,z_0+h_0]$, we introduce the affine invertible transformation
\begin{equation*}
 F_{\Delta_z}: [0,1]\rightarrow[z_0,z_0+h_0],z=h_0\xi+z_0,\xi\in[0,1].
\end{equation*}
Thus we select $\tau$ such that $\tau_2=\tau_3=0$  and
\begin{equation*}
\tau_1\in\{\phi_i\xi,\phi_i(1-\xi)\}_{i=1}^{30},
\end{equation*}
$\tau_1=\tau_3=0$ and
\begin{equation*}
\tau_2\in\{\psi_i\xi(2\xi-1),\psi_i(1-\xi)(1-2\xi),\psi_i\xi(1-\xi)\}_{i=1}^{12},
\end{equation*}
and $\tau_1=\tau_2=0$
 \begin{equation*}\begin{split}
\tau_3\in&\{ \lambda_i\xi(3\xi-1)(3\xi-2),\lambda_i\xi(1-\xi)(2-3\xi),\lambda_i\xi(1-\xi)(3\xi-1),\\
&\lambda_i(1-\xi)(3\xi-1)(3\xi-2)\}^3_{i=1}.
 \end{split}
\end{equation*}
In this way, we obtain the basis functions of $\Sigma_{1,h}$ on $K$. Thus, the degrees of freedom on each element of the lowest order element are 108 plus 33.
\section{The stability  analysis for the  mixed triangular prism elements}
\label{sec:stability analysis}
In this section, we consider the well-posedness of the discrete problem  \eqref{diseqn1}. By the standard theory, we only need to prove the following two conditions, based on their counterparts at the continuous level.
\begin{itemize}
  \item K-ellipticity. There exists a constant $C>0$, independent of the meshsize $h$ such that
  \begin{equation*}
    (A\tau_h,\tau_h)\geq C\|\tau_h\|^2_{H({\rm div},\Omega)}\text{ for any }\tau_h\in W_h,
  \end{equation*}
  where $W_h$ is the divergence-free space defined as follows
  \begin{equation*}
    W_h:=\{\tau_h\in\Sigma_{k,h}\ |\ (\d\tau_h,v)=0\text{ for all }v\in V_{k,h}\}.
  \end{equation*}
  \item Discrete inf-sup condition. There exists a positive constant $C>0$ independent of the meshsize $h$, such that
\begin{equation*}
\sup\limits_{0\not=\tau_h\in\Sigma_{k,h}}\frac{({\rm div}\tau_h,v_h)}
{\|\tau_h\|_{H(\d,\Omega)}}\geq C\|v_h\|_{0,\Omega} \quad
 \text{ for any } v_h\in V_{k,h}.
\end{equation*}
\end{itemize}
It can be easily checked that  ${\rm div}\Sigma_{k,h}\subset V_{k,h}$. Hence  ${\rm div}\tau_h=0$ for any $\tau_h\in W_h$ and this implies the above K-ellipticity condition. It  remains to show the discrete inf-sup condition. We first introduce the following lemma in \cite{Hu2015trianghigh,HuZhang2014a}, which is a key ingredient to prove the discrete inf-sup condition for mixed triangular elasticity elements.

Let $R_{xy}(\Delta_{xy})$ be the rigid motion space in two dimensions, which reads
$$
R_{xy}(\Delta_{xy}):=\text{span}\bigg\{
\begin{pmatrix} 1\\ 0 \end{pmatrix}, \begin{pmatrix} 0\\ 1\end{pmatrix}, \begin{pmatrix}y\\ -x\end{pmatrix}\bigg\}.
$$
Define the orthogonal complement space of $R_{xy}(\Delta_{xy})$ with respect to $P_{k+1}(\Delta_{xy};\R^2)$ by
\begin{equation*}
R_{xy}^\perp(\Delta_{xy}):=\{v\in P_{k+1}(\Delta_{xy};\R^2)\ |\ (v,w)_{\Delta_{xy}}=0\text{ for any }w\in R_{xy}(\Delta_{xy})\},
\end{equation*}
where the inner product $(v,w)_{\Delta_{xy}}$ over $\Delta_{xy}$ reads $(v,w)_{\Delta_{xy}}=\int_{\Delta_{xy}} v\cdot w\,dxdy.$
\begin{lemma}\label{Lem:twoelasinf}
It holds that
\begin{equation*}
{\rm div}_{xy}H_{\Delta_{xy},k+2, b}=R_{xy}^\perp(\Delta_{xy}).
\end{equation*}
\end{lemma}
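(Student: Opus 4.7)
The plan is to prove the two inclusions separately.

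For the forward inclusion $\operatorname{div}_{xy} H_{\Delta_{xy}, k+2, b} \subset R_{xy}^\perp(\Delta_{xy})$, I would use integration by parts together with the defining properties of the bubble space. Any $\tau\in H_{\Delta_{xy}, k+2, b}$ satisfies $\tau\nu_{xy}=0$ on $\partial\Delta_{xy}$ and is symmetric. For any rigid motion $w\in R_{xy}(\Delta_{xy})$, the symmetric gradient $\epsilon(w):=\tfrac12(\nabla_{xy}w+\nabla_{xy}w^T)$ vanishes identically. Thus integration by parts gives
\[
\int_{\Delta_{xy}} \operatorname{div}_{xy}\tau\cdot w\,dxdy
= -\int_{\Delta_{xy}}\tau:\nabla_{xy}w\,dxdy
= -\int_{\Delta_{xy}}\tau:\epsilon(w)\,dxdy = 0,
\]
which, combined with the obvious fact $\operatorname{div}_{xy}\tau\in P_{k+1}(\Delta_{xy};\R^2)$, shows that $\operatorname{div}_{xy}\tau\in R_{xy}^\perp(\Delta_{xy})$.

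For the reverse inclusion I would argue by dimension count. A direct calculation gives $\dim H_{\Delta_{xy},k+2,b} = 3\cdot\dim P_k(\Delta_{xy};\R) = \tfrac{3(k+1)(k+2)}{2}$ and $\dim R_{xy}^\perp(\Delta_{xy}) = 2\dim P_{k+1}(\Delta_{xy};\R) - 3 = (k+2)(k+3)-3 = k^2+5k+3$. Combined with the inclusion proved above, it therefore suffices to show that the kernel of $\operatorname{div}_{xy}$ acting on $H_{\Delta_{xy},k+2,b}$ has dimension exactly $\tfrac{3(k+1)(k+2)}{2} - (k^2+5k+3) = \tfrac{k(k-1)}{2}$.

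The heart of the argument is thus the kernel characterization, which I would handle via an Airy-stress-function representation. Since $\Delta_{xy}$ is simply connected, every symmetric polynomial matrix field $\tau\in P_{k+2}(\Delta_{xy};\mathbb{S}_2)$ with $\operatorname{div}_{xy}\tau=0$ can be written as $\tau=J\phi$ with $J\phi=\bigl(\begin{smallmatrix}\partial_{yy}\phi & -\partial_{xy}\phi\\ -\partial_{xy}\phi & \partial_{xx}\phi\end{smallmatrix}\bigr)$ for some $\phi\in P_{k+4}(\Delta_{xy};\R)$, and $\phi$ is unique up to affine functions. The boundary condition $\tau\nu_{xy}=0$ on $\partial\Delta_{xy}$ translates to $\phi$ and $\partial_\nu\phi$ vanishing along each edge, so $\phi$ must be divisible by $\lambda_i^2$ along each edge $\{\lambda_i=0\}$. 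Hence $\phi = b_{xy}^2 q$ with $b_{xy}=\lambda_1\lambda_2\lambda_3$ and $q\in P_{k-2}(\Delta_{xy};\R)$, giving a kernel of dimension $\dim P_{k-2}(\Delta_{xy};\R)=\binom{k}{2}=\tfrac{k(k-1)}{2}$, matching the count.

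The main obstacle is the Airy-stress-function step: one must verify that the representation $\tau=J\phi$ stays within polynomials of the expected degree, that the affine ambiguity can be eliminated to make $\phi$ vanish to second order on $\partial\Delta_{xy}$, and that the resulting $q$ ranges freely in $P_{k-2}$. Once this is in place, comparing dimensions closes the argument and yields the equality stated in the lemma.
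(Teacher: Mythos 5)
First, note that the paper itself does not prove this lemma: it is quoted from \cite{Hu2015trianghigh,HuZhang2014a}, so the comparison is with the proof given there. Your argument is correct but follows a genuinely different route. The cited proof is a short duality argument: the inclusion ${\rm div}_{xy}H_{\Delta_{xy},k+2,b}\subset R_{xy}^\perp(\Delta_{xy})$ is obtained exactly as in your first step, and for the converse one supposes some $v\in R_{xy}^\perp(\Delta_{xy})$ is $L^2$-orthogonal to the image; integrating by parts against $\tau=\lambda_i\lambda_j p\,T_{i,j}$ gives $\int_{\Delta_{xy}}\lambda_i\lambda_j p\,\bigl(\bft_{i,j}^T\varepsilon(v)\bft_{i,j}\bigr)dxdy=0$ with $\varepsilon(v)=\frac12(\nabla_{xy}v+\nabla_{xy}v^T)\in P_k(\Delta_{xy};\mathbb{S}_2)$; taking $p=\bft_{i,j}^T\varepsilon(v)\bft_{i,j}\in P_k(\Delta_{xy};\R)$ forces $\bft_{i,j}^T\varepsilon(v)\bft_{i,j}=0$ for all $i<j$, hence $\varepsilon(v)=0$ because the $T_{i,j}$ span $\mathbb{S}_2$, so $v\in R_{xy}(\Delta_{xy})\cap R_{xy}^\perp(\Delta_{xy})=\{0\}$. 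Your route instead counts dimensions and characterizes the kernel of ${\rm div}_{xy}$ on the bubble space via the Airy representation $\tau=J\phi$ with $\phi=b_{xy}^2q$, $q\in P_{k-2}(x,y)$; the arithmetic is right ($\tfrac{3(k+1)(k+2)}{2}-\tfrac{k(k-1)}{2}=k^2+5k+3$), and the Airy step does go through, with one point you should state more carefully: $\tau\nu_{xy}=0$ on an edge $e$ translates to $\phi|_e$ affine and $\partial_\nu\phi|_e$ constant, not to $\phi=\partial_\nu\phi=0$ there; one then subtracts the affine interpolant of the three vertex values (which agrees with $\phi$ on all of $\partial\Delta_{xy}$ since $\phi|_{\partial\Delta_{xy}}$ is continuous and edgewise affine), after which the vanishing tangential derivatives along the two edge directions meeting at each vertex force $\nabla_{xy}\phi=0$ at the vertices, so the edgewise-constant normal derivatives vanish and $\lambda_i^2\,|\,\phi$ for each $i$. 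As for what each approach buys: yours yields extra information, namely the explicit description $J\bigl(b_{xy}^2P_{k-2}(x,y)\bigr)$ of the divergence-free bubbles, while the duality argument is shorter, avoids the elasticity complex entirely, and extends verbatim to simplices in $\R^n$, where the Airy potential has no equally simple analogue.
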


Next we follow the arguments in \cite{Hu2015cuboid,Hu2015trianghigh,HuZhang2014a,HuZhang2015tre} to analyze the discrete inf-sup condition. To this end, we define the bubble function space
\begin{equation*}
\Sigma_{K,k,b}:=\{\tau\in\Sigma_k(K),\tau\nu=0\text{ on }\partial K\}.
\end{equation*}
Here  $\nu$ denotes the normal vector of $\partial K$. Let $RM(K)$ be the rigid motion space in  three dimensions, which reads
$$
RM(K):=\text{span}\bigg\{
\begin{pmatrix} 1\\ 0\\ 0 \\ \end{pmatrix}, \begin{pmatrix} 0\\ 1 \\ 0\end{pmatrix}, \begin{pmatrix}0\\ 0 \\ 1\end{pmatrix},
 \begin{pmatrix}-y\\ x \\ 0\end{pmatrix}, \begin{pmatrix}-z\\ 0 \\ x\end{pmatrix}, \begin{pmatrix}0\\ -z \\ y\end{pmatrix}\bigg\}.
$$
Define the orthogonal complement space of the rigid motion space $RM(K)$ with respect to $V_k(K)$ by
\begin{equation*}
RM^\perp(K):=\{v\in V_k(K)\ |\ (v,w)_{K}=0\text{ for any }w\in RM(K)\},
\end{equation*}
where the inner product $(v,w)_{K}$ over $K$ reads $(v,w)_K=\int_K v\cdot w\,dxdydz.$
\begin{lemma}\label{step1}For any $k\geq 1$, it holds that
\begin{equation*}
  {\rm div}\Sigma_{K,k,b}=RM^\perp(K).
\end{equation*}
\end{lemma}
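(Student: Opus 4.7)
The inclusion $\operatorname{div}\Sigma_{K,k,b}\subseteq V_k(K)$ is a direct polynomial-degree check from \eqref{stresselement} and \eqref{VK}, and $\operatorname{div}\Sigma_{K,k,b}\subseteq RM^\perp(K)$ follows by integration by parts: for any $\tau\in\Sigma_{K,k,b}$ and $w\in RM(K)$, $(\operatorname{div}\tau,w)_K=-(\tau,\varepsilon(w))_K+\int_{\partial K}(\tau\nu)\cdot w\,dS=0$, since $\varepsilon(w)=0$ for rigid motions and $\tau\nu|_{\partial K}=0$ for bubbles.

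For the reverse inclusion $\operatorname{div}\Sigma_{K,k,b}\supseteq RM^\perp(K)$, the plan is an explicit three-stage construction exploiting the product structure $K=\Delta_{xy}\times\Delta_z$. The condition $\tau\nu|_{\partial K}=0$ forces $\tau_1\in H_{\Delta_{xy},k+2,b}\otimes P_k(z)$, $\tau_2\in {\rm BDM}_{\Delta_{xy},k+1,b}\otimes b_z P_{k-1}(z)$, and $\tau_{33}\in P_k(x,y)\otimes b_z P_k(z)$, where $b_z$ is the quadratic bubble on $\Delta_z$. Given $v=(v_1,v_2,v_3)^T\in V_k(K)\cap RM^\perp(K)$, Stage~1 constructs a pair $(\tau_2,\tau_{33})$ satisfying $\operatorname{div}_{xy}\tau_2+\partial_z\tau_{33}=v_3$: the map $\partial_z:b_z P_k(z)\to\{p\in P_{k+1}(z):\int_{\Delta_z}p=0\}$ is an isomorphism and absorbs the zero-$z$-average part of $v_3$, while the surjectivity of $\operatorname{div}_{xy}:{\rm BDM}_{\Delta_{xy},k+1,b}\to\{p\in P_k(x,y):\int_{\Delta_{xy}}p=0\}$ applied to $\int_{\Delta_z}\tau_2\,dz$ absorbs the $z$-average $\bar v_3:=\frac{1}{|\Delta_z|}\int_{\Delta_z}v_3\,dz$; solvability requires $\int_K v_3\,dV=0$, which is $v\perp e_3$. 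Stage~2 uses the freedom remaining in $\tau_2$ to impose, for every $z$, that $v_{12}-\partial_z\tau_2\in R_{xy}^\perp(\Delta_{xy})$; after integration in $z$, these three slice-wise conditions reduce to the orthogonality of $v$ against $e_1, e_2, ye_1-xe_2$ and, through their coupling with $\bar v_3$, against the rotations $(-z,0,x)^T$ and $(0,-z,y)^T$. Stage~3 applies Lemma~\ref{Lem:twoelasinf} slice by slice to produce $\tau_1\in H_{\Delta_{xy},k+2,b}\otimes P_k(z)$ with $\operatorname{div}_{xy}\tau_1=v_{12}-\partial_z\tau_2$, completing the identity $\operatorname{div}\tau=v$.

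The main obstacle is Stage~2: verifying that the residual freedom in $\tau_2$ left after Stage~1 is enough to secure the $R_{xy}^\perp(\Delta_{xy})$ compatibility demanded by Stage~3 is the point at which all six rigid-motion constraints of $v\in RM^\perp(K)$ enter. A more conceptual alternative is to dualize the statement: the desired inclusion is equivalent to the implication that $(\tau,\varepsilon(v))_K=0$ for every $\tau\in\Sigma_{K,k,b}$ forces $\varepsilon(v)=0$ on $V_k(K)$, which one can establish by exhibiting, for each independent mode of $\varepsilon(v)$, a matching test tensor drawn from one of the three blocks $\tau_1,\tau_2,\tau_{33}$ of $\Sigma_k(K)$.
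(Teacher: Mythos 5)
Your easy inclusion, your identification of the bubble space as $\Sigma_{K,k,b}=\bigl(H_{\Delta_{xy},k+2,b}\times P_k(z)\bigr)\oplus\bigl({\rm BDM}_{\Delta_{xy},k+1,b}\times b_zP_{k-1}(z)\bigr)\oplus\bigl(P_k(x,y)\times b_zP_k(z)\bigr)$, and your Stages 1 and 3 are all sound. The genuine gap is Stage 2, which you yourself flag as "the main obstacle" and then do not carry out — and it is precisely where the content of the lemma lies. To invoke Lemma \ref{Lem:twoelasinf} slice-wise in Stage 3 you must arrange that $(v_1,v_2)^T-\partial_z\tau_2$ is orthogonal to $R_{xy}(\Delta_{xy})\times P_k(z)$, i.e. you must satisfy $3(k+1)$ moment conditions, one for each pair $(w,q)$ with $w\in\{(1,0)^T,(0,1)^T,(y,-x)^T\}$ and $q\in P_k(z)$. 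Only the three conditions with $q$ constant reduce directly to $v\perp e_1$, $v\perp e_2$, $v\perp(-y,x,0)^T$ (since $\int_K\partial_z\tau_2\cdot w\,dV=0$ for $\tau_2$ vanishing at the $z$-endpoints). The remaining $3k$ conditions must be met by adjusting $\tau_2$ within the subspace whose ${\rm div}_{xy}$ can be reabsorbed into $\tau_{33}$ without disturbing Stage 1, and your proposal gives no argument that this adjustment map is surjective; nor does it make precise how the two tilting rotations $(-z,0,x)^T$ and $(0,-z,y)^T$ enter, beyond the phrase "through their coupling with $\bar v_3$". Since $v\in RM^\perp(K)$ supplies only six scalar constraints while Stage 2 demands $3(k+1)$ compatibilities, the claim that the former imply the latter is exactly the nontrivial surjectivity statement being proved, and asserting it is circular as written.

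The one-sentence "conceptual alternative" you mention at the end is in fact the paper's proof: assume a nonzero $v\in RM^\perp(K)$ annihilates ${\rm div}\Sigma_{K,k,b}$ and test against the three blocks in turn. Testing with $\tau_1\in H_{\Delta_{xy},k+2,b}\times P_k(z)$ and Lemma \ref{Lem:twoelasinf} forces $(v_1,v_2)^T\in R_{xy}(\Delta_{xy})\times P_k(z)$; testing with $\tau_{33}\in b_zP_k(x,y)P_k(z)$ forces $v_3\in P_k(x,y)$; finally, testing with $(\tau_{13},\tau_{23})^T\in b_z{\rm BDM}_{\Delta_{xy},k+1,b}P_{k-1}(z)$ and choosing the specific test fields $b_zp_3{\rm curl}_{xy}b_{xy}$ and $b_zq_1w$ (using $\int_{\Delta_{xy}}{\rm curl}_{xy}b_{xy}\cdot(y,-x)^T\,dxdy\neq0$ and the BDM degrees of freedom) eliminates, term by term, every mode of $\partial_z(v_1,v_2)^T+\nabla_{xy}v_3$, yielding $v\in RM(K)$ and hence $v=0$. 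This dual route avoids your Stage 2 solvability issue entirely because it never needs to exhibit a global preimage, only one annihilating test tensor per residual mode; if you want to salvage the constructive version, you would have to prove the surjectivity of the Stage 2 adjustment map, which amounts to redoing this dual computation anyway.
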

\begin{proof}
Since it is straightforward to see that ${\rm div}\Sigma_{K,k,b}\subset RM^\perp(K)$, we only need to prove the converse. If ${\rm div}\Sigma_{K,k,b}\neq{RM}^\perp(K)$, there is a nonzero
 $v=(v_1,v_2,v_3)^T\in RM^\perp(K)$ such that
 \begin{equation*}
   \int_K{\div}\tau\cdot v\,dxdydz=0\text{ for any } \tau\in\Sigma_{K,k,b}.
    \end{equation*}

First, we  choose $\tau=\begin{pmatrix}
                     \tau_{1} & 0 \\
                     0^T &  0\\
                   \end{pmatrix}\in \Sigma_{K,k,b}$ such that $\tau_1\in H_{\Delta_{xy},k+2,b}\times P_k(z)$.
It follows that
  \an{ \nonumber
   0=\int_K  \d \tau \cdot
     v \, dxdydz= \int_{ K} \d_{xy}\tau_1\cdot\begin{pmatrix}
                     v_1\\
                     v_2\\
                   \end{pmatrix}\, dxdydz. }
From \eqref{VK}, we have $(v_1,v_2)^T\in P_{k+1}(\Delta_{xy};\R^2)\times P_k(z)$. This, together  with Lemma \ref{Lem:twoelasinf} shows that
 \begin{equation}\label{conclu1}
   (v_1,v_2)^T\in R_{xy}(\Delta_{xy})\times P_k(z).
 \end{equation}

Second, we take $\tau$ such that $\tau_{11}=\tau_{12}=\tau_{22}=\tau_{13}=\tau_{23}=0$ and
\begin{equation*}
  \tau_{33}\in b_z\times P_k(x,y)\times  P_k(z),
\end{equation*}
where the bubble function $b_z$ is defined in Lemma \ref{degreeoffreedom}. An integration by parts yields
  \an{ \nonumber
   0=\int_K  \d \tau \cdot
     v \, dxdydz= -\int_{ K} \tau_{33}\frac{\partial v_3}{\partial z}\, dxdydz. }
     Since $\frac{\partial v_3}{\partial z}\in P_{k}(x,y)\times P_k(z)$, it holds that
\begin{equation}\label{clu1}
  v_3\in P_k(x,y).
\end{equation}

Third, we use degrees of freedom of $\tau_{13}$ and $\tau_{23}$ to deal with the remaining part of $(v_1,v_2,v_3)^T$ in \eqref{conclu1} and \eqref{clu1}.
 Given $\tau\in\Sigma_{K,k,b}$ such that $\tau_{11}=\tau_{12}=\tau_{22}=\tau_{33}=0$ and $(\tau_{13},\tau_{23})^T\in b_z\times{\rm BDM}_{\Delta_{xy},k+1,b}\times P_{k-1}(z)$, we have
\begin{equation*}
  0=\int_K\bigg((\frac{\partial \tau_{13}}{\partial z}v_1+\frac{\partial\tau_{23}}{\partial z}v_2)+v_3\d_{xy}(\tau_{13},\tau_{23})^T\bigg)\,dxdydz.
\end{equation*}
Recall that $K=\Delta_{xy}\times \Delta_z$. An integration by parts gives rises to
\begin{equation}\label{steplast0}
  \int_{\Delta_z}\bigg(\int_{\Delta_{xy}}(\tau_{13},\tau_{23})^T\cdot(\frac{\partial}{\partial z}(v_1,v_2)^T+\nabla_{xy}v_3)\, dxdy\bigg)dz=0.
\end{equation}It follows  from \eqref{conclu1} that there exist two constants $c_1$ and $c_2$, $p_1,p_2\in P_{k-2}(z)$, and $p_3\in P_{k-1}(z)$ such that
 \begin{equation*}\label{eqv1}
   \frac{\partial}{\partial z}(v_1,v_2)^T+\nabla_{xy}v_3=\nabla_{xy}(c_1x+c_2y+v_3+xzp_1+yzp_2) +p_3\begin{pmatrix}
                                                                    y \\
                                                                    -x\\
                                                                  \end{pmatrix}.
                                                                  \end{equation*}
Then, the choice $(\tau_{13},\tau_{23})^T= b_zp_3{\rm curl}_{xy}b_{xy}$ in \eqref{steplast0}
implies that
\begin{equation}\label{steplast}
 -\frac{|\Delta_{xy}|}{30} \int_{\Delta_{z}}b_zp_3^2\,dz=0,
\end{equation}
where $b_{xy}$ is defined in \eqref{BDMbubtwo}. Indeed, a simple computation shows that
\begin{equation*}
{\rm div}_{xy}{\rm curl}_{xy}b_{xy}=0\text{ and }{\rm curl}_{xy}b_{xy}\in {\rm BDM}_{\Delta_{xy},k+1,b},
\end{equation*}
and
\begin{equation}\label{integralresult}
 \int_{\Delta_{xy}}{\rm curl}_{xy}b_{xy}\cdot(y,-x)^T\,dxdy=-\frac{|\Delta_{xy}|}{30}\neq 0.
\end{equation}
Further, using \eqref{steplast}, we obtain $p_3=0$. Next we show that $\nabla_{xy}(c_1x+c_2y+v_3)= 0$.
If otherwise, it follows from  the second degrees of freedom for the BDM space in Section \ref{sec:finite element} that there exists $w\in {\rm BDM}_{\Delta_{xy},k+1,b}$ such that
\begin{equation*}
\int_{\Delta_{xy}}w\cdot \nabla_{xy}(c_1x+c_2y+v_3)\, dxdy=1.
\end{equation*}
Thus, selecting $(\tau_{13},\tau_{23})^T=b_zq_1w$ in \eqref{steplast0}, where $q_1\in P_{k-1}(z)$ satisfies
\begin{equation*}
  \int_{\Delta_z}b_zq_1\,dz=1\text{ and }\int_{\Delta_z}b_zq_1zp_i\,dz=0\text{ for }i=1,2.
\end{equation*}
This leads to a contradiction in \eqref{steplast0}  that $1=0$. Hence $\nabla_{xy}(c_1x+c_2y+v_3)=0$. On the other hand, we select $w\in {\rm BDM}_{\Delta_{xy},k+1,b}$ such that $\int_{\Delta_{xy}}w\cdot\nabla_{xy}x\,dxdy=1$ and $\int_{\Delta_{xy}}w\cdot\nabla_{xy}y\,dxdy=0$, and $q_1=zp_1$. This gives $p_1=0$.
Similar choice yields $p_2=0$.   Hence, a collection of the above arguments yields
\begin{equation}\label{clu3}
\frac{\partial}{\partial z}(v_1,v_2)^T+\nabla_{xy}v_3=0.
\end{equation}
Consequently, we conclude, by \eqref{conclu1}, \eqref{clu1} and \eqref{clu3},
\begin{equation*}
  v=(v_1,v_2,v_3)^T\in RM(K),
\end{equation*}
which completes the proof.
\end{proof}
Before giving the following lemma, we present the $H^1$ conforming triangular prism element ($k\geq 1$)
\begin{equation*}
 U_{k,h}=\{v\in H^1(\Omega;\mathbb{S})\ |\ v|_K\in P_k(\Delta_{xy};\mathbb{S})\times P_k(z)\text{ for any }K\in\cT_h\}.
\end{equation*}
Let $\widetilde{I}_h: H^1(\Omega;\mathbb{S})\rightarrow U_{k,h}$ denote the Scott-Zhang interpolation operator  in \cite{ScottZhang1990} that satisfies
\begin{equation}\label{SZbound}
  \|\tau-\widetilde{I}_h\tau\|_{0,\Omega}+h\|\nabla\widetilde{I}_h\tau\|_{0,\Omega}\leq Ch\|\nabla\tau\|_{0,\Omega}.
\end{equation}
\begin{lemma}\label{step2}Given any integer $k\geq1$, there exists an interpolation operator $I_h:H^1(\Omega;\mathbb{S})\rightarrow\Sigma_{k,h}$ satisfying for any $\tau\in H^1(\Omega;\mathbb{S})$,
\begin{equation}\label{inteRig1}
  \int_K\d(\tau-I_h\tau)\cdot w\,dxdydz=0\text{ for any }w\in RM(K) \text{ and any }K\in\cT_h
\end{equation}
and
\begin{equation}\label{inteRig2}
  \|I_h\tau\|_{H(\d,\Omega)}\leq C\|\tau\|_{1,\Omega}.
\end{equation}
\end{lemma}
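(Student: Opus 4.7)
Plan: Following the approach of \cite{HuZhang2014a,Hu2015trianghigh,Hu2015cuboid}, I would construct $I_h\tau$ in two stages. First, regularize $\tau$ using the Scott--Zhang operator $\tilde I_h : H^1(\Omega;\mathbb{S}) \to U_{k,h} \subset \Sigma_{k,h}$ of \eqref{SZbound}. Second, define $I_h\tau \in \Sigma_{k,h}$ on each element $K$ via the degrees of freedom of Lemma \ref{degreeoffreedom}: take the vertex DOFs (1) and edge DOFs (4) from $\tilde I_h \tau$, since they cannot be evaluated on a generic $H^1(\Omega;\mathbb{S})$ tensor field in three dimensions, and take the face DOFs (2), (5), (6), (7), (10) and the interior DOFs (3), (8), (9), (11) directly from $\tau$, whose face traces belong to $L^2$.

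The stability bound \eqref{inteRig2} then follows from a standard scaling argument on the reference prism, together with the multiplicative trace inequality and \eqref{SZbound}. For the divergence condition \eqref{inteRig1}, I would fix $K$ and $w \in RM(K)$ and use that the symmetry of $\tau - I_h\tau$ together with the antisymmetry of $\nabla w$ reduce, via integration by parts, the identity to
\[
\int_{\partial K}(\tau - I_h\tau)\nu \cdot w\,dF = 0.
\]
I would then decompose this sum over the five faces of $K$. On a side face $F$, direct inspection of the six basis elements of $RM(K)$ shows that $(w_1,w_2)|_F \in Q_{k,k}(F;\R^2)$ and $w_3|_F \in Q_{k+1,k-1}(F)$ for every $k \ge 1$, so DOFs (2) and (5) make the side-face terms vanish individually. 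On the top and bottom faces, DOFs (6) and (10) handle all translations and the two out-of-plane rotations $(-z,0,x)$ and $(0,-z,y)$.

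The main obstacle is the in-plane rotation $w = (-y,x,0)$, whose restriction to a top or bottom face yields the test $(-y,x)$ against $(\tau - I_h\tau)_2$. This vector belongs neither to $\nabla_{xy}P_k$ nor to $\Psi_{k+1}$, so the BDM face DOFs by themselves do not force the top and bottom moments to vanish separately. The required top-minus-bottom cancellation reduces, through the fundamental theorem of calculus in $z$, to the interior identity $\int_K \partial_z(\tau - I_h\tau)_2 \cdot (-y,x)\,dxdydz = 0$. I would establish this last identity by exploiting the product structure $(I_h\tau)_2 \in \mathrm{BDM}_{k+1} \otimes \mathcal{L}^1_{k+1}(\mathcal{Z}_h)$, the interior DOFs (8) and (9), and a 2D Stokes-type rewriting of $\int_{\Delta_{xy}}\tau_2 \cdot (-y,x)\,dxdy$ in terms of $\operatorname{rot}(\tau_2)$ and tangential boundary data on $\partial \Delta_{xy}$.
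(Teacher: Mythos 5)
Your overall skeleton --- Scott--Zhang regularization followed by a correction pinned down through the degrees of freedom of Lemma \ref{degreeoffreedom}, integration by parts using that $\nabla w$ is antisymmetric for $w\in RM(K)$, and a face-by-face check of $\int_{\partial K}(\tau-I_h\tau)\nu\cdot w\,dF$ --- is the paper's, and your treatment of the side faces, the translations, and the two out-of-plane rotations is correct. You also correctly isolate the genuine difficulty: on the top and bottom faces the in-plane rotation tests $(\tau-I_h\tau)_2$ against $(-y,x)$, which lies in neither $\nabla_{xy}P_k(x,y)$ nor $\Psi_{k+1}(\Delta_{xy})$.

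Your proposed resolution of that difficulty, however, does not close. Rewriting the top-minus-bottom difference as $\int_K\partial_z(\tau-I_h\tau)_2\cdot(-y,x)\,dxdydz$ is the fundamental theorem of calculus run in reverse: integrating by parts in $z$ simply returns the two face integrals, so nothing is gained unless some available functional actually controls the $(-y,x)$-moment, and none does. The interior conditions (8) and (9) test $(\tau-I_h\tau)_2$ only against $\nabla_{xy}P_k(x,y)\times P_{k-1}(z)$ and $\Psi_{k+1}(\Delta_{xy})\times P_{k-1}(z)$, and $(-y,x)={\rm curl}_{xy}\bigl(-(x^2+y^2)/2\bigr)$ is not of the form $\nabla_{xy}p+{\rm curl}_{xy}(b_{xy}v)$. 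The Stokes-type rewriting
\begin{equation*}
\int_{\Delta_{xy}}v\cdot{\rm curl}_{xy}\phi\,dxdy=\int_{\Delta_{xy}}\phi\,(\partial_x v_2-\partial_y v_1)\,dxdy+\int_{\partial\Delta_{xy}}\phi\,(v\cdot\bft)\,ds
\end{equation*}
introduces the tangential trace of the error on $\partial\Delta_{xy}$, which no degree of freedom in Lemma \ref{degreeoffreedom} sees (only the normal components $\tau_2\cdot\nu_{xy}$ appear in (4) and (5)), and the two-dimensional rotation of the error is likewise uncontrolled; moreover the edge moments (4) of $\tau-I_h\tau$ do not vanish, since you take them from $\widetilde I_h\tau$. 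The paper's fix is different and is the one essential idea you are missing: it \emph{changes the functional set} used on the top and bottom faces, replacing the single condition in (7) corresponding to $p={\rm curl}_{xy}b_{xy}$ by the moment against $(y,-x)^T$, so that the correction matches the face moments of $\tau_2-\widetilde\tau_{2,h}$ against the whole rigid-motion space $R_{xy}(\Delta_{xy})$. This replacement yields an equivalent, unisolvent set of degrees of freedom precisely because of \eqref{integralresult}, $\int_{\Delta_{xy}}{\rm curl}_{xy}b_{xy}\cdot(y,-x)^T\,dxdy=-|\Delta_{xy}|/30\neq0$; with it, each top and bottom face term vanishes individually and \eqref{inteRig1} follows. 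Without such a modification of the interpolation conditions, the identity $\int_K\partial_z(\tau-I_h\tau)_2\cdot(-y,x)\,dxdydz=0$ cannot be derived.
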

\begin{proof}

 We use notations $\tau_1,\tau_2,\tau_3$ to denote the corresponding parts of $\tau$  as in \eqref{stresselement}, and $\widetilde{\tau}_{1,h},\widetilde{\tau}_{2,h},\widetilde{\tau}_{3,h}$ are similar defined for $\widetilde{I}_h\tau$ such that $\widetilde{I}_h\tau=\begin{pmatrix}
                        \widetilde{\tau}_{1,h} & \widetilde{\tau}_{2,h} \\
                        \widetilde{\tau}_{2,h}^T& \widetilde{\tau}_{3,h} \\
                      \end{pmatrix}
 $.
 It follows from degrees of freedom in Lemma \ref{degreeoffreedom} that there exists $\tau_{1,h}\in H_{k+2,h}\times \mathcal{L}^0_k(\mathcal{Z}_h)$, $\tau_{2,h}\in {\rm BDM}_{k+1}\times  \mathcal{L}_{k+1}^1(\mathcal{Z}_h)$ and $\tau_{3,h}\in \mathcal{L}^0_k(\mathcal{X}_h)\times \mathcal{L}_{k+2}^1(\mathcal{Z}_h)$ such that for face $F$ that parallels the $z$-axis,
      \begin{equation*}
  \int_{F}\tau_{1,h}\nu_{xy}\cdot p\,ds=\int_{F}(\tau_1-\widetilde{\tau}_{1,h})\nu_{xy}\cdot p\,ds\text{ for any }p\in Q_{1,1}(F),
      \end{equation*}
\begin{equation*}
 \int_{F}\tau_{2,h}\cdot\nu_{xy}p\,ds=\int_{F}(\tau_2-\widetilde{\tau}_{2,h})\cdot\nu_{xy}p\,ds\text{ for any }p\in Q_{1,0}(F),
      \end{equation*}
and for face $F$ that parallels the $(x,y)$-plane,
\begin{equation}\label{modifydegree}
\int_{F}\tau_{2,h}\cdot  p\,dxdy=\int_{F}(\tau_2-\widetilde{\tau}_{2,h})\cdot  p\,dxdy\text{ for any }p\in R_{xy}(\Delta_{xy}),
\end{equation}
\begin{equation*}
\int_{F}\tau_{3,h}p\,dxdy=\int_{F}(\tau_3-\widetilde{\tau}_{3,h})p\,dxdy\text{ for any }p\in P_1(x,y).
\end{equation*}
Note that \eqref{modifydegree} is a combination of (6) and a slight modification of (7) in Lemma \ref{degreeoffreedom}, replacing $p={\rm curl}b_{xy}\in\Psi_{k+1}(\Delta_{xy})$ with $p=(y,-x)^T$ there. This is valid because of the result \eqref{integralresult}. In addition, the remaining degrees of freedom vanish for $\tau_{1,h}, \tau_{2,h}$ and $\tau_{3,h}$.

Since $U_{k,h}\subset\Sigma_{k,h}$, we define $I_h\tau=\widetilde{I}_h\tau+\begin{pmatrix}
              \tau_{1,h} & \tau_{2,h} \\
              \tau_{2,h}^T & \tau_{3,h} \\
            \end{pmatrix}
$. An integration by parts immediately yields that \eqref{inteRig1} holds true.
The stability estimate follows from \eqref{SZbound} and the definition of the correction $\begin{pmatrix}
              \tau_{1,h} & \tau_{2,h} \\
              \tau_{2,h}^T & \tau_{3,h} \\
            \end{pmatrix}$.
\end{proof}
\begin{theorem}\label{Thm:inf} For $k\geq 1$, there exists a positive constant
  $C$ independent of the meshsize $h$ with
\begin{equation}
\sup\limits_{0\not=\tau_h\in\Sigma_{k, h}}\frac{(\d\tau_h,v_h)}
{\|\tau_h\|_{H(\d,\Om)}}\geq C\|v_h\|_{0,\Omega} \quad
 \text{ for any } v_h\in V_{k, h}.\nn
\end{equation}
\end{theorem}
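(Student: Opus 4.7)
The plan is to follow the standard Fortin-type construction used in \cite{HuZhang2014a,Hu2015cuboid,Hu2015trianghigh}, in which Lemmas \ref{step1} and \ref{step2} play the roles of the local (bubble) and global (interpolation) pieces. Given $v_h\in V_{k,h}$, the aim is to build $\tau_h\in\Sigma_{k,h}$ with $\d\tau_h=v_h$ and $\|\tau_h\|_{H(\d,\Om)}\le C\|v_h\|_{0,\Om}$; plugging such $\tau_h$ into the supremum gives $(\d\tau_h,v_h)=\|v_h\|_{0,\Om}^2$ and the claimed lower bound follows immediately.

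First I would use the classical surjectivity of $\d:H^1(\Om;\mathbb S)\to L^2(\Om;\R^3)$ on a Lipschitz domain to obtain $\tau\in H^1(\Om;\mathbb S)$ with $\d\tau=v_h$ and $\|\tau\|_{1,\Om}\le C\|v_h\|_{0,\Om}$. Applying the interpolation operator of Lemma \ref{step2} produces $I_h\tau\in\Sigma_{k,h}$ with $\|I_h\tau\|_{H(\d,\Om)}\le C\|v_h\|_{0,\Om}$, and, by \eqref{inteRig1}, on each $K\in\cT_h$ and each $w\in RM(K)$,
\begin{equation*}
\int_K(v_h-\d I_h\tau)\cdot w\,dx\,dy\,dz=\int_K\d(\tau-I_h\tau)\cdot w\,dx\,dy\,dz=0.
\end{equation*}
Since $\d\Sigma_{k,h}\subset V_{k,h}$ was already noted, this means $v_h-\d I_h\tau\in RM^{\perp}(K)$ elementwise.

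Second, on each $K$ Lemma \ref{step1} supplies a bubble $\tau_{K,b}\in\Sigma_{K,k,b}$ with $\d\tau_{K,b}=v_h-\d I_h\tau$. A standard scaling argument on the finite-dimensional quotient $\Sigma_{K,k,b}/\ker(\d)$, combined with shape-regularity, yields the local stability
\begin{equation*}
\|\tau_{K,b}\|_{0,K}+\|\d\tau_{K,b}\|_{0,K}\le C\|v_h-\d I_h\tau\|_{0,K}.
\end{equation*}
Because $\tau_{K,b}\nu=0$ on $\partial K$, the global sum $\tau_b:=\sum_{K\in\cT_h}\tau_{K,b}$ lies in $\Sigma_{k,h}$. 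Setting $\tau_h:=I_h\tau+\tau_b$ gives $\d\tau_h=v_h$ and
\begin{equation*}
\|\tau_h\|_{H(\d,\Om)}\le\|I_h\tau\|_{H(\d,\Om)}+C\|v_h-\d I_h\tau\|_{0,\Om}\le C\|v_h\|_{0,\Om},
\end{equation*}
which completes the argument.

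The real difficulty has already been discharged inside Lemma \ref{step1}: matching the kernel $RM(K)$ exactly by bubble stresses with vanishing normal trace is the delicate part of the analysis, and it is where the product structure of the prism and the nondegeneracy identity \eqref{integralresult} are used in an essential way. Given Lemmas \ref{Lem:twoelasinf}, \ref{step1} and \ref{step2}, the remaining Fortin assembly above is routine, and the constant $C$ depends only on the shape-regularity of $\cT_h$.
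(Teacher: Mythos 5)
Your proposal is correct and follows essentially the same route as the paper: continuous surjectivity of $\d$ to get $\tau$, the interpolation operator of Lemma \ref{step2} to match rigid motions elementwise, and the bubble surjectivity of Lemma \ref{step1} to correct the divergence locally, then summing. The only difference is that you make explicit the scaling/shape-regularity argument for the stability of the local bubble correction, which the paper leaves implicit when it asserts $\|\delta_h\|_{H(\d,\Omega)}\le C\|v_h-\d I_h\tau\|_{0,\Omega}$.
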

\begin{proof} By the stability of the continuous formulation, see \cite{Arnold-Winther-conforming,Hu2015trianghigh}, there exists a $\tau\in H^1(\Omega;\mathbb{S})$ such  that
     \begin{equation*}
       \d\tau=v_h\text{ and }\|\tau\|_{1,\Omega}\leq C\|v_h\|_{0,\Omega}.
     \end{equation*}
  This plus Lemma \ref{step2} implies that
\begin{equation}
\int_K(\div I_h\tau-v_h)\cdot w dx=0 \text{ for any }  w\in RM(K) \text{ and any element } K
\end{equation}
and
\begin{equation}
\|I_h\tau\|_{H(\div, \Omega)}\leq C\|v_h\|_{0,\Omega}.
\end{equation}
By Lemma \ref{step1}, there exists a $\delta_h\in \Sigma_{k, h}$ such that
\begin{equation}
\div \delta_h=v_h-\d I_h\tau\text{ and } \|\delta_h\|_{H(\div, \Omega)}\leq C\|v_h-\d I_h\tau\|_{0, \Omega}.
\end{equation}
Let $\tau_h=I_h\tau+\delta_h$. Then we have $\d\tau_h=v_h$ and $\|\tau_h\|_{H(\div, \Omega)}\leq C\|v_h\|_{0, \Omega}$.
\end{proof}
\begin{remark}\label{Fortininte}
Similarly as mentioned in \cite{Hu2015trianghigh}, it follows from Lemma \ref{step1} and Lemma \ref{step2} that there exists an interpolation operator ${\rm\Pi}_h:H^1(\Omega;\mathbb{S})\rightarrow\Sigma_{k,h}$ such that
\begin{equation*}
  (\d(\tau-{\rm\Pi}_h\tau),v_h)_K=0\text{ for any }K\text{ and }v_h\in V_{k,h}
\end{equation*}
for any $\tau\in H^1(\Omega;\mathbb{S})$. Further, if $\tau\in H^{k+1}(\Omega;\mathbb{S})$, it holds that
\begin{equation*}
  \|\tau-{\rm\Pi}_h\tau\|_{0,\Omega}\leq Ch^{k+1}\|\tau\|_{k+1,\Omega}.
\end{equation*}
\end{remark}
\begin{theorem}\label{MainError} Let
  $(\sigma, u)\in\Sigma\times V$ be the exact solution of
   problem \meqref{eqn1} and $(\sigma_h, u_h)\in\Sigma_{k,h}\times
   V_{k,h}$ the finite element solution of \meqref{diseqn1}.  Then,
   for $k\ge 1$,
\an{ \lab{t1} \|\sigma-\sigma_h\|_{H({\rm div},\Omega)}
    + \|u-u_h\|_{0,\Omega}&\le     Ch^{k+1}(\|\sigma\|_{k+2,\Omega}+\|u\|_{k+1,\Omega}).
      }
\end{theorem}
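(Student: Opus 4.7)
The plan is to follow the standard Brezzi framework for mixed methods, using the ingredients already established: the K-ellipticity, the discrete inf-sup condition (Theorem 3.1), and the Fortin-type interpolant $\Pi_h$ from Remark 3.2. The key feature of $\Pi_h$ is that it satisfies the commuting diagram property: for every $K$ and $v_h\in V_{k,h}$, $(\div(\sigma-\Pi_h\sigma),v_h)_K=0$, which together with ${\rm div}\,\Sigma_{k,h}\subset V_{k,h}$ means $\div\Pi_h\sigma=P_h\div\sigma$, where $P_h$ denotes the elementwise $L^2$-projection onto $V_{k,h}$.

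First I would estimate $\|\sigma-\sigma_h\|_{H(\div,\Omega)}$. Subtracting \eqref{diseqn1} from \eqref{eqn1} restricted to the discrete spaces gives the Galerkin orthogonalities
\begin{equation*}
(A(\sigma-\sigma_h),\tau)+(\div\tau,u-u_h)=0,\qquad (\div(\sigma-\sigma_h),v)=0
\end{equation*}
for all $\tau\in\Sigma_{k,h}$, $v\in V_{k,h}$. The second identity combined with the commuting property forces $\div\sigma_h=P_h\div\sigma=\div\Pi_h\sigma$, so $\div(\Pi_h\sigma-\sigma_h)=0$. Choosing $\tau=\Pi_h\sigma-\sigma_h$ in the first identity (its divergence vanishes, killing the displacement term) and applying K-ellipticity yields
\begin{equation*}
\|\Pi_h\sigma-\sigma_h\|_{H(\div,\Omega)}^2\le C(A(\sigma-\Pi_h\sigma),\Pi_h\sigma-\sigma_h)\le C\|\sigma-\Pi_h\sigma\|_{0,\Omega}\|\Pi_h\sigma-\sigma_h\|_{0,\Omega}.
\end{equation*}
A triangle inequality then gives $\|\sigma-\sigma_h\|_{0,\Omega}\le C\|\sigma-\Pi_h\sigma\|_{0,\Omega}$, while $\|\div(\sigma-\sigma_h)\|_{0,\Omega}=\|\div\sigma-P_h\div\sigma\|_{0,\Omega}$ is controlled by the approximation property of $P_h$ on $V_{k,h}$.

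Second, for the displacement error I would use the discrete inf-sup condition applied to $u_h-P_hu\in V_{k,h}$. For any $\tau\in\Sigma_{k,h}$, since $\div\tau\in V_{k,h}$, we have $(\div\tau,u-P_hu)=0$, so the first Galerkin identity gives
\begin{equation*}
(\div\tau,P_hu-u_h)=(\div\tau,u-u_h)=-(A(\sigma-\sigma_h),\tau).
\end{equation*}
Dividing by $\|\tau\|_{H(\div,\Omega)}$, taking the supremum and invoking Theorem 3.1 yields $\|P_hu-u_h\|_{0,\Omega}\le C\|\sigma-\sigma_h\|_{0,\Omega}$, and a triangle inequality with $\|u-P_hu\|_{0,\Omega}$ finishes the bound.

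It remains to insert the approximation estimates. Remark 3.2 already gives $\|\sigma-\Pi_h\sigma\|_{0,\Omega}\le Ch^{k+1}\|\sigma\|_{k+1,\Omega}$. The $L^2$-projection estimate on the displacement space, whose elementwise shape function space contains $P_{k+1}(x,y)\times P_k(z)$ in the first two components and $P_k(x,y)\times P_{k+1}(z)$ in the third, yields $\|w-P_hw\|_{0,\Omega}\le Ch^{k+1}\|w\|_{k+1,\Omega}$ for any $w\in H^{k+1}(\Omega;\mathbb{R}^3)$; applying this separately to $u$ and to $\div\sigma$ and bounding $\|\div\sigma\|_{k+1,\Omega}\le\|\sigma\|_{k+2,\Omega}$ produces exactly the right-hand side of \eqref{t1}. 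No single step here is delicate — the main conceptual obstacle was already resolved in establishing Lemma 3.3, Lemma 3.4 and Remark 3.2, and the remaining argument is the textbook Brezzi estimate tuned to the anisotropic polynomial degrees of the prism spaces.
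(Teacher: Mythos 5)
Your proposal is correct and follows essentially the same route as the paper: both rest on the Fortin interpolant $\Pi_h$ of Remark \ref{Fortininte} with its commuting property $\div\Pi_h\sigma=P_h\div\sigma$, the inclusion $\div\Sigma_{k,h}\subset V_{k,h}$, the K-ellipticity and the discrete inf-sup condition, and then insert the interpolation estimates for $\Pi_h$ and $P_h$. The only difference is that the paper invokes the standard quasi-optimality bound from the Brezzi theory as a black box and plugs in $\tau_h=\Pi_h\sigma$, $v_h=P_hu$, whereas you rederive that bound explicitly; your derivation is sound.
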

\begin{proof}
We follow the standard error estimate of mixed finite element methods in \cite{Boffi2013}
\begin{equation*}
  \|\sigma-\sigma_h\|_{H({\rm div},\Omega)}
    + \|u-u_h\|_{0,\Omega}\leq C\inf_{\tau_h\in\Sigma_{k,h},v_h\in V_{k,h}}(\|\sigma-\tau_h\|_{H({\rm div},\Omega)}
    + \|u-v_h\|_{0,\Omega}).
\end{equation*}
Let $P_h$ denote the local $L^2$ projection operator, from $V$ to $V_{k,h}$, satisfying the error estimate
\begin{equation*}
  \|v-P_hv\|_{0,\Omega}\leq Ch^{k+1}\|v\|_{k+1,\Omega}\text{ for any }v\in H^{k+1}(\Omega;\R^3).
\end{equation*}
Choosing $\tau_h={\rm\Pi}_h\sigma$ where ${\rm\Pi}_h$ is defined in Remark \ref{Fortininte}, note that $\d{\rm\Pi}_h\sigma=P_h\d\sigma$, we have
\begin{equation*}
  \|\sigma-{\rm\Pi}_h\sigma\|_{H({\rm div},\Omega)}\leq Ch^{k+1}\|\sigma\|_{k+2,\Omega}.
\end{equation*}
Consequently, a  choice of $v_h=P_hu$ and $\tau_h={\rm\Pi}_h\sigma$ completes the proof.
\end{proof}

\section{Reduced mixed triangular prism elements}
\label{sec:reducedFEM}
In this section, we provide a family of reduced spaces of $\Sigma_{k,h}$ in \eqref{stressspace}. According to Lemma \ref{step1}, we know that we only need the degrees of freedom of bubble function space $\Sigma_{K,k,b}$ to deal with the space $RM^\perp(K)$. From the proof of Lemma \ref{step2}, we only need degrees of freedom on faces of the  lowest order element to deal with the rigid motion space $RM(K)$ on each element $K$. Hence the stress finite elements can be reduced by replacing $H_{k+2,h}$ and ${\rm BDM}_{k+1}$ in \eqref{stressspace} by $\widetilde{H}_{k+2,h}$ and $\widetilde{{\rm BDM}}_{k+1}$ as follows
\an{\nonumber
 \widetilde{H}_{k+2,h} :=\Big\{~\tau
    &\in H({\rm div}_{xy},\Omega_{xy};\mathbb{S}_{2}) \ \Big| \ \tau=\tau_\ell+\tau_b,
             \ \tau_\ell\in H_{k,h}, \\
     &
	\nonumber  \ \tau_b|_{\Delta_{xy}}\in H_{\Delta_{xy},k+2, b}  \text{ for any } \Delta_{xy}\in\mathcal{X}_h \Big\}, }
\an{\nonumber
 \widetilde{{\rm BDM}}_{k+1}: =\Big\{~\tau
    &\in H({\rm div}_{xy},\Omega_{xy};\mathbb{S}_{2}) \ \Big| \ \tau=\tau_\ell+\tau_b,
             \ \tau_\ell\in {\rm BDM}_{k}, \\
     &
	\nonumber  \ \tau_b|_{\Delta}\in {\rm BDM}_{\Delta_{xy},k+1, b}  \text{ for any } \Delta_{xy}\in\mathcal{X}_h \Big\}. }
\begin{remark}
We know that $H_{k,h}$ is defined for $k\geq 3$ in \eqref{Sh}. When $k=1,2$, we refer interested readers to \cite{HuZhang2015trianglow} for those two cases and omit the specific definitions herein. Thus, the degrees of freedom on each element of our lowest order case, which is of second order convergence, are 93 plus 33.
\end{remark}
We use $\widetilde{\Sigma}_{k,h}$ to denote the new stress spaces. The reduced elements preserve the same convergence order.
\begin{theorem}\label{MainErrorReduced} Let
  $(\sigma, u)\in\Sigma\times V$ be the exact solution of
   problem \meqref{eqn1} and $(\sigma_h, u_h)\in\widetilde{\Sigma}_{k,h}\times
   V_{k,h}$ the discrete solution by the reduced triangular prism elements.  Then,
   for $k\ge 1$,
\an{ \nonumber \|\sigma-\sigma_h\|_{H({\rm div,\Omega})}
    + \|u-u_h\|_{0,\Omega}&\le Ch^{k+1}(\|\sigma\|_{k+2,\Omega}+\|u\|_{k+1,\Omega}).
      }
\end{theorem}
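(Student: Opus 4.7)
The plan is to mirror the proof of Theorem \ref{MainError}, verifying that every ingredient used there transfers intact from $\Sigma_{k,h}$ to the reduced space $\widetilde{\Sigma}_{k,h}$. First I would note that $\operatorname{div}\widetilde{\Sigma}_{k,h}\subset V_{k,h}$ still holds, since the reduction only drops high-order face polynomials while leaving the interior bubble content untouched; the K-ellipticity on the divergence-free subspace then follows exactly as in Section \ref{sec:stability analysis}.

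For the discrete inf-sup condition, I would revisit Lemma \ref{step1} and Lemma \ref{step2}. The proof of Lemma \ref{step1} uses only the bubble space $\Sigma_{K,k,b}$, which by construction is preserved: $\widetilde{\Sigma}_{k,h}$ retains the full bubbles $H_{\Delta_{xy},k+2,b}$ and $\operatorname{BDM}_{\Delta_{xy},k+1,b}$ together with the $z$-direction polynomial factors. The construction of $I_h$ in Lemma \ref{step2} invokes only the lowest-order face moments $Q_{1,1}$, $Q_{1,0}$, $R_{xy}(\Delta_{xy})$, and $P_1$ on the faces, and these remain degrees of freedom of $\widetilde{\Sigma}_{k,h}$; this is precisely the design idea behind the reduction. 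Consequently Theorem \ref{Thm:inf} transfers with no essential change, and the analog of Remark \ref{Fortininte} yields a commuting interpolation $\widetilde{\Pi}_h:H^1(\Omega;\mathbb{S})\to\widetilde{\Sigma}_{k,h}$ satisfying $\operatorname{div}\widetilde{\Pi}_h\sigma=P_h\operatorname{div}\sigma$ on every element.

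Applying the standard mixed error bound from \cite{Boffi2013} then reduces the claim to estimating $\|\sigma-\widetilde{\Pi}_h\sigma\|_{H(\operatorname{div},\Omega)}$ and $\|u-P_hu\|_{0,\Omega}$. The displacement piece is immediate since $V_{k,h}$ is unchanged. For the divergence piece, commutativity gives $\|\operatorname{div}(\sigma-\widetilde{\Pi}_h\sigma)\|_{0,\Omega}=\|\operatorname{div}\sigma-P_h\operatorname{div}\sigma\|_{0,\Omega}\le Ch^{k+1}\|\sigma\|_{k+2,\Omega}$. For the $L^2$ part, the key observation is that the piecewise-$P_k$ symmetric $H^1$-conforming space $U_{k,h}$ is still contained in $\widetilde{\Sigma}_{k,h}$; hence the Scott-Zhang bound \eqref{SZbound} together with the boundedness of the face correction built into $\widetilde{\Pi}_h$ deliver $\|\sigma-\widetilde{\Pi}_h\sigma\|_{0,\Omega}\le Ch^{k+1}\|\sigma\|_{k+1,\Omega}$. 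Combining these with $\|u-P_hu\|_{0,\Omega}\le Ch^{k+1}\|u\|_{k+1,\Omega}$ gives precisely the asserted bound.

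The main obstacle I anticipate is the low-order cases $k=1,2$, for which $H_{k,h}$ and the associated element for the in-plane stress are replaced by the low-order variants from \cite{HuZhang2015trianglow}. One has to check separately that those variants still supply the specific lowest-order face degrees of freedom invoked in Lemma \ref{step2} and still contain the piecewise-$P_k$ shape functions needed for the optimal $L^2$ approximation; for $k\ge 3$ the argument above is essentially mechanical.
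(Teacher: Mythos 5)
Your proposal is correct and follows essentially the same reasoning the paper gives: the paper states Theorem \ref{MainErrorReduced} without a formal proof, justifying it in the preamble of Section \ref{sec:reducedFEM} by exactly the two observations you make, namely that Lemma \ref{step1} only uses the bubble spaces (which the reduction retains) and Lemma \ref{step2} only uses the lowest-order face moments (which the reduction also retains), so the inf-sup argument and the Fortin-type interpolation carry over verbatim. Your additional checks — that $U_{k,h}\subset\widetilde{\Sigma}_{k,h}$ preserves the $O(h^{k+1})$ approximation order and that the $k=1,2$ cases require the low-order variants of \cite{HuZhang2015trianglow} — are accurate and consistent with the paper's remark following the definition of the reduced spaces.
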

\begin{remark}
For $k=1$, if we utilize the first order nonconforming stress space of  \cite{ArnoldWinther2003,Gopalakrishnan2011} in two dimensions instead of the first order conforming element of \cite{HuZhang2015trianglow}, we obtain a nonconforming mixed triangular prism element of first order convergence, with 81 plus 33 degrees of freedom. The following error estimate holds
\an{ \nonumber\|\sigma-\sigma_h\|_{0,\Omega}
    + \|u-u_h\|_{0,\Omega}&\le Ch\|u\|_{2,\Omega}.
      }
\end{remark}

\section{Nonconforming mixed tetrahedral elements}
\label{sec:tetrahedral element}
We construct a family of nonconforming mixed tetrahedral elements for the linear elasticity problem. The shape function spaces of the new stress spaces are different from those of the reduced elements
in \cite{Gopalakrishnan2011}.
\subsection{The discrete stress and displacement spaces}
For simplification, we still use some notations in previous sections and adapt them  to the current case. The domain $\Omega$ is subdivided by a family of shape regular tetrahedral meshes $\cT_h$. We introduce the displacement
space as the full $C^{-1}$-$P_k$ space
\begin{equation}\label{nondisplace}
  V_{k,h}^{\rm nc}:=\{v\in L^2(\Omega;\R^3)\ |\ v|_K\in P_{k}(K;\R^3)\text{ for all }K\in\cT_h\}.
\end{equation}
Let $\bx_i(1\leq i\leq 4)$ denote the vertices of tetrahedron $K$ and $\bft_{i,j}=\bx_j-\bx_i(i\neq j)$ the tangent vector of edge $\bx_i\bx_j$. Let $\lambda_i(1\leq i\leq4)$ denote the barycentric coordinates with respect to $\bx_i$.
Before defining the discrete stress space, we first propose the following shape function space on element $K$
\begin{equation*}
  \Sigma^{\rm nc}_k(K):=\{\tau=\sum_{1\leq i<j\leq 4}p_{ij}\bft_{i,j}\bft_{i,j}^T\ |\ p_{ij}\in P_{ij}\},
\end{equation*}
where
\begin{equation}\label{nonelementO}
\begin{split}
 P_{ij}:
 =&P_{k}(K;\R)+(\lambda_i-\lambda_j)\widetilde{P}_{k}(\lambda_\ell,\lambda_m)+\lambda_i\lambda_jP_{k-1}(K;\R)
 \end{split}
\end{equation}
and $\{\ell,m\}=\{1,2,3,4\}\backslash\{i,j\}$, see the definition of $\widetilde{P}_{k}(\lambda_\ell,\lambda_m)$ in \eqref{def:bary}.
We use the following degrees of freedom in \cite{Gopalakrishnan2011}:
\begin{itemize}
  \item[(1)] $\int_F\tau\nu\cdot v\,ds$ for any $v\in P_{k}(F;\R^3)$ and face $F\subset \partial K$,
  \item[(2)] $\int_K\tau:p\,dx$ for any $p\in H_{K,k+1,b}$,
  where
   \begin{equation*}
    H_{K,k+1,b}:=\sum_{1\leq i<j\leq 4}\lam_i\lam_jP_{k-1}(K;\R)\bft_{i,j}\bft_{i,j}^T.
    \end{equation*}
\end{itemize}
Note that the  conditions in (2) are slightly different from those of \cite{Gopalakrishnan2011}. The definition of $H_{K,k+1,b}$ follows from \cite{Hu2015trianghigh,HuZhang2015tre}, which satisfies
\begin{equation*}
H_{K,k+1,b}=\{\tau\in P_{k+1}(K;\mathbb{S})\ |\ \tau\nu|_{\partial K}=0\}\text{ and }
{\rm div}H_{K,k+1,b}=RM^\perp(K).
\end{equation*}
Here \begin{equation*}
RM^\perp(K):=\{v\in P_k(K;\R^3)\ |\ (v,w)_{K}=0\text{ for any }w\in RM(K)\}.
\end{equation*}
In order to count the dimensions of $\Sigma^{\rm nc}_k(K)$, we propose the following direct sum decomposition, which can be  easily checked.
\begin{lemma} It holds that
\begin{equation}\label{nonelement}
 P_{ij}
 =P_{k}(K;\R)\oplus(\lambda_i-\lambda_j)\widetilde{P}_{k}(\lambda_\ell,\lambda_m)\oplus \lambda_i\lambda_j P^\perp_{k-2}(K;\R),
\end{equation}
where $$P^\perp_{k-2}(K;\R):=\{p\in P_{k-1}(K;\R)\ |\ (p,q)_K=0\text{ for all }q\in P_{k-2}(K;\R)\}.$$
\end{lemma}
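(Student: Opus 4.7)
The plan is to verify \eqref{nonelement} in two stages: first, that the two sides agree as subspaces of polynomials, and second, that the right-hand sum is direct.

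For the set equality, the key observation is that $\lambda_i\lambda_j$ has degree $2$, so $\lambda_i\lambda_j P_{k-2}(K;\R)\subseteq P_k(K;\R)$. Splitting $P_{k-1}(K;\R)=P_{k-2}(K;\R)\oplus P^\perp_{k-2}(K;\R)$ and multiplying by $\lambda_i\lambda_j$, the $P_{k-2}$ piece is absorbed into $P_k(K;\R)$, which immediately identifies $P_{ij}$ with the right-hand side as a subspace.

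The crux is directness. Suppose $p+(\lambda_i-\lambda_j)q+\lambda_i\lambda_j r=0$ with $p\in P_k(K;\R)$, $q\in \widetilde{P}_k(\lambda_\ell,\lambda_m)$, and $r\in P^\perp_{k-2}(K;\R)$. I would extract the homogeneous degree-$(k+1)$ part of this identity, viewing everything as polynomials in the Cartesian variables on $K$. Writing $\widetilde{\lambda}_\alpha$ for the linear part of the affine function $\lambda_\alpha$, the leading symbol reads
\begin{equation*}
(\widetilde{\lambda}_i-\widetilde{\lambda}_j)\,q(\widetilde{\lambda}_\ell,\widetilde{\lambda}_m)+\widetilde{\lambda}_i\widetilde{\lambda}_j\,r_{k-1}=0,
\end{equation*}
where $r_{k-1}$ is the degree-$(k-1)$ homogeneous part of $r$ and $p$ contributes nothing at this degree. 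Since $\widetilde{\lambda}_1,\widetilde{\lambda}_2,\widetilde{\lambda}_3,\widetilde{\lambda}_4$ sum to zero with any three algebraically independent, restricting the identity to the hyperplane $\widetilde{\lambda}_i=0$ kills the second term and reduces the first to $(\widetilde{\lambda}_\ell+\widetilde{\lambda}_m)\,q(\widetilde{\lambda}_\ell,\widetilde{\lambda}_m)=0$ in the polynomial domain $\R[\widetilde{\lambda}_\ell,\widetilde{\lambda}_m]$, forcing $q=0$. Reinstating this, $\widetilde{\lambda}_i\widetilde{\lambda}_j\,r_{k-1}=0$ forces $r_{k-1}=0$, so $r\in P_{k-2}(K;\R)\cap P^\perp_{k-2}(K;\R)=\{0\}$; then $p=0$ from the original identity.

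The only subtle point is the passage to the leading symbol, since the barycentric coordinates are affine rather than linear and $\widetilde{P}_k(\lambda_\ell,\lambda_m)$ is homogeneous in the barycentric sense rather than the Cartesian sense. Once one notes that degree is preserved under the invertible affine change of variables between $(x,y,z)$ and $(\widetilde{\lambda}_i,\widetilde{\lambda}_\ell,\widetilde{\lambda}_m)$, and that the top-degree homogeneous part of a product equals the product of the top-degree parts, the whole argument reduces to polynomial algebra in three algebraically independent linear forms.
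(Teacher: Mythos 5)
Your argument is correct and complete. Note that the paper states this lemma without proof (``which can be easily checked''), so there is no authorial argument to compare against; your write-up supplies the missing verification. The two steps that carry the weight are exactly the right ones: (i) the set equality follows from $\lambda_i\lambda_jP_{k-2}(K;\R)\subseteq P_k(K;\R)$ after splitting $P_{k-1}=P_{k-2}\oplus P^\perp_{k-2}$, and (ii) directness follows by extracting the degree-$(k+1)$ leading symbol, where $p$ drops out, and then restricting to the hyperplane $\widetilde{\lambda}_i=0$, on which $\widetilde{\lambda}_i\widetilde{\lambda}_j r_{k-1}$ vanishes and $\widetilde{\lambda}_i-\widetilde{\lambda}_j$ becomes $\widetilde{\lambda}_\ell+\widetilde{\lambda}_m$ via $\sum_\alpha\widetilde{\lambda}_\alpha=0$. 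The points you flag as needing care are indeed the only delicate ones, and they hold: any three of the gradients $\nabla\lambda_\alpha$ are linearly independent, so $\widetilde{\lambda}_\ell,\widetilde{\lambda}_m$ remain algebraically independent on $\{\widetilde{\lambda}_i=0\}$, forcing $q=0$, then $r_{k-1}=0$, hence $r\in P_{k-2}\cap P^\perp_{k-2}=\{0\}$ and finally $p=0$. It is also worth observing (as your argument implicitly uses) that the homogeneity of $\widetilde{P}_k(\lambda_\ell,\lambda_m)$ in the barycentric variables is essential here; with the full space $P_k(\lambda_\ell,\lambda_m)$ the middle summand would intersect $P_k(K;\R)$ nontrivially and the sum could not be direct.
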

\begin{theorem}\label{Thm:non}
The conditions in (1) and (2) form unisolvent degrees of freedom for $\Sigma^{\rm nc}_k(K)$.
\end{theorem}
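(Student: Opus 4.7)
My approach is the standard dimension-matching plus injectivity strategy for unisolvence. First, from the direct sum \eqref{nonelement} one computes $\dim P_{ij}=\binom{k+3}{3}+(k+1)+\binom{k+2}{3}-\binom{k+1}{3}$, so $\dim\Sigma^{\rm nc}_k(K)=6\dim P_{ij}$, while the degrees of freedom in (1) and (2) contribute $4\cdot 3\binom{k+2}{2}$ and $6\binom{k+2}{3}$ scalars, respectively. Using the identities $\binom{k+3}{3}=\binom{k+2}{3}+\binom{k+2}{2}$ and $\binom{k+2}{2}=\binom{k+1}{2}+(k+1)$, a short binomial computation confirms the two counts agree, so it suffices to show that any $\tau\in\Sigma^{\rm nc}_k(K)$ annihilating all conditions in (1) and (2) must vanish.

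\textbf{Reduction to boundary vanishing.} Assume all degrees of freedom vanish on $\tau$. The decomposition \eqref{nonelement} places $\Sigma^{\rm nc}_k(K)\subset P_{k+1}(K;\mathbb{S})$, and also yields $H_{K,k+1,b}\subset\Sigma^{\rm nc}_k(K)$: indeed, $\lambda_i\lambda_j P_{k-1}(K;\mathbb{R})=\lambda_i\lambda_j P_{k-2}(K;\mathbb{R})\oplus\lambda_i\lambda_j P^\perp_{k-2}(K;\mathbb{R})$, where the first summand sits inside the $P_k(K;\mathbb{R})$-piece of $P_{ij}$ and the second is its third summand. Combined with the characterization $H_{K,k+1,b}=\{\sigma\in P_{k+1}(K;\mathbb{S}):\sigma\nu|_{\partial K}=0\}$ recorded just before the present theorem, it thus suffices to show $\tau\nu|_{\partial K}=0$: once established, $\tau\in H_{K,k+1,b}$, and taking $p=\tau$ in (2) gives $\|\tau\|_{0,K}^2=0$, hence $\tau=0$.

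\textbf{Boundary vanishing, the main obstacle.} Splitting each coefficient as $p_{ij}=p_{ij}^{(c)}+(\lambda_i-\lambda_j)q_{ij}+\lambda_i\lambda_j r_{ij}$ in accordance with \eqref{nonelement}, on a face $F_r$ the edges tangent to $F_r$ contribute nothing since $\bft_{i,j}\cdot\nu_r=0$ whenever $r\notin\{i,j\}$, so only the three edges incident to $\bx_r$ matter:
\begin{equation*}
\tau\nu_r\big|_{F_r}=\sum_{j\ne r}\bigl(p_{rj}^{(c)}\big|_{F_r}\mp\lambda_j q_{rj}\bigr)(\bft_{r,j}\cdot\nu_r)\bft_{r,j}.
\end{equation*}
The technical heart of the argument is that this trace lives in $P_{k+1}(F_r;\mathbb{R}^3)$, while condition (1) only tests against $P_k(F_r;\mathbb{R}^3)$. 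My plan to close this gap is: first, use the linear independence of $\{\bft_{r,j}\}_{j\ne r}$ in $\mathbb{R}^3$ to decouple (1) into the edge-wise orthogonality $p_{rj}^{(c)}|_{F_r}\mp\lambda_j q_{rj}\perp_{L^2(F_r)}P_k(F_r;\mathbb{R})$; next, test with $w=q_{rj}\in\widetilde{P}_k(\lambda_\ell,\lambda_m)\subset P_k(F_r;\mathbb{R})$ to obtain the positivity-type identity
\begin{equation*}
\int_{F_r}\lambda_j q_{rj}^2\,ds=\pm\int_{F_r}p_{rj}^{(c)}\big|_{F_r}\,q_{rj}\,ds,
\end{equation*}
whose left-hand side is coercive in $q_{rj}$ because $\lambda_j>0$ on the interior of $F_r$; and finally, combine the two face-constraints on each edge $\{r,j\}$ (one from $F_r$ and one from $F_j$) with condition (2) restricted to the bubble subspace corresponding to the third summand of \eqref{nonelement} to pin down $q_{rj}=0$ and hence $p_{rj}^{(c)}|_{F_r}=0$, producing $\tau\nu|_{\partial K}=0$ and completing the injectivity.
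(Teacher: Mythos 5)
Your setup is sound: the dimension count is correct, the decoupling of condition (1) into edge-wise scalar orthogonalities via dual vectors is exactly what the paper does, and the reduction ``trace vanishes $\Rightarrow \tau\in H_{K,k+1,b} \Rightarrow$ condition (2) with $p=\tau$ kills $\tau$'' is a clean and correct way to phrase the endgame. You have also correctly isolated the real difficulty: on $F_r$ the trace $p_{rj}^{(c)}|_{F_r}\mp\lambda_j q_{rj}$ has degree $k+1$, while (1) only tests against $P_k(F_r;\mathbb{R})$.

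But the step that resolves this difficulty is precisely the one you leave unproved. Testing with $w=q_{rj}$ gives $\int_{F_r}\lambda_j q_{rj}^2\,ds=\pm\int_{F_r}p^{(c)}_{rj}q_{rj}\,ds$, whose right-hand side is not known to vanish, so nothing follows. And ``combine the two face-constraints with condition (2)'' is not an argument: condition (2) pairs $\tau$ against interior bubbles and can only control the $\lambda_i\lambda_j P^\perp_{k-2}(K;\mathbb{R})$ component; it gives no information about $q_{rj}$ or about the face traces of $p^{(c)}_{rj}$. The missing idea --- the heart of the paper's proof --- is to exploit that $q_{rj}\in\widetilde{P}_{k}(\lambda_\ell,\lambda_m)$ depends only on the two barycentric coordinates shared by $F_r$ and $F_j$, so that under the natural identification of $F_j$ with $F_r$ (swapping $\lambda_r\leftrightarrow\lambda_j$) the two orthogonality relations can be \emph{added}: the $q_{rj}$-terms cancel because they carry the opposite factors $-\lambda_j$ and $+\lambda_r$, leaving a degree-$k$ polynomial orthogonal to all of $P_k(F_r;\mathbb{R})$, hence identically zero. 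This yields the antisymmetry $C_{\alpha_\ell\alpha_m 0\alpha_j}=-C_{\alpha_\ell\alpha_m\alpha_j 0}$ of the coefficients of $p^{(c)}_{rj}$ (equation \eqref{keypoint} in the paper), after which the relation on $F_r$ collapses to $\lambda_j\cdot(\text{degree-}k)\perp P_k(F_r;\mathbb{R})$ and a positivity test (choose $w$ equal to the bracket) finishes the trace argument. Without this cancellation mechanism your sketch does not close, so as written there is a genuine gap at the decisive step.
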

\begin{proof}
It follows immediately from the direct  sum decomposition \eqref{nonelement} that the dimensions of $\Sigma^{\rm nc}_k(K)$ are equal to the number of conditions in (1) and (2).  Next, we show that if the degrees of freedom all vanish for some $\tau\in\Sigma^{\rm nc}_k(K)$, then $\tau=0$.

Suppose  $\tau=\sum\limits_{1\leq i<j\leq 4}p_{ij}\bft_{i,j}\bft_{i,j}^T$ and $p_{ij}\in  P_{ij}$. In the following, we suppose $p_{ij}$ is defined for any $1\leq i\neq j\leq 4$ with $p_{ij}=p_{ji}$. Give vertex $\bx_i$, let $F_i$ denote the face opposite it. Note that for any
 $v\in P_{k}(F_i;\R^3)$
\begin{equation*}
0=\int_{F_i}\tau\nu\cdot v\,ds=\sum^4_{s=1,s\neq i}(\bft_{i,s}\cdot\nu)\int_{F_i}p_{is}(\bft_{i,s}\cdot v)\,ds.
\end{equation*}
Since $\bft_{i,s}(1\leq s\leq 4,s\neq i)$ are linearly independent, there exists a vector $\bfn$ satisfies $\bfn\cdot\bft_{i,j}=1$ and $\bfn\cdot\bft_{i,s}=0$ for $s=\ell,m$. By selecting $v=w_1\bfn$ in the above equation, we obtain
\begin{equation*}
 \int_{F_i}p_{ij}w_1\,ds=0\text{ for any }w_1\in  P_{k}(F_i;\R).
 \end{equation*}
 Similar arguments show that
\begin{equation*}
 \int_{F_j}p_{ij}w_2\,ds=0\text{ for any }w_2\in  P_{k}(F_j;\R).
 \end{equation*}
Thanks to the decomposition \eqref{nonelement}, $p_{ij}$ is of the form
 \begin{equation}\label{faceeq0}
 \begin{split}
 p_{ij}=\sum_{\alpha_\ell+\alpha_m+\alpha_i+\alpha_j=k}&C_{\alpha_\ell\alpha_m\alpha_i\alpha_j}
 \lambda_\ell^{\alpha_\ell}\lambda_m^{\alpha_m}\lambda_i^{\alpha_i}\lambda_j^{\alpha_j}\\
 &
 +(\lambda_i-\lambda_j)\sum_{\beta_\ell+\beta_m=k}D_{\beta_\ell\beta_m}
 \lambda_\ell^{\beta_\ell}\lambda_m^{\beta_m}+\lambda_i\lambda_jp_1,
 \end{split}
\end{equation}
where $p_1\in P_{k-2}^\perp(K;\R)$. Here we use the fact that
\begin{equation*}
  P_{k}(K;\R)={\rm span}\left\{\prod^4_{s=1}\lambda_s^{\alpha_s},\sum^4_{s=1}\alpha_s=k\right\}.
\end{equation*}
The expression in \eqref{faceeq0} immediately yields that for any $w_1\in  P_{k}(F_i;\R)$
\begin{equation}
\begin{split}
\label{faceeq1}
  \int_{F_i}\big(\sum_{\alpha_\ell+\alpha_m+\alpha_j=k}&C_{\alpha_\ell\alpha_m0\alpha_j}
  \lambda_\ell^{\alpha_\ell}\lambda_m^{\alpha_m}\lambda_j^{\alpha_j}\\
  &-\lambda_j\sum_{\beta_\ell+\beta_m=k}D_{\beta_\ell\beta_m}
 \lambda_\ell^{\beta_\ell}\lambda_m^{\beta_m}\big)w_1\,ds=0,\\
 \end{split}
\end{equation}
and for any $w_2\in  P_{k}(F_j;\R)$
\begin{equation}\begin{split}
 \label{faceeq2}
  \int_{F_j}\big(\sum_{\alpha_\ell+\alpha_m+\alpha_i=k}&C_{\alpha_\ell\alpha_m\alpha_i0}
  \lambda_\ell^{\alpha_\ell}\lambda_m^{\alpha_m}\lambda_i^{\alpha_i}\\
  &+\lambda_i\sum_{\beta_\ell+\beta_m=k}D_{\beta_\ell\beta_m}
 \lambda_\ell^{\beta_\ell}\lambda_m^{\beta_m}\big)w_2\,ds=0.\end{split}
\end{equation}
Note that the restrictions of $\lambda_s(s\neq i)$ on $F_i$ and $\lambda_s(s\neq j)$ on $F_j$ are exactly the barycentric coordinates on $F_i$ and $F_j$, respectively. Then, it holds that
\begin{equation*}
  P_{k}(F_j;\R)={\rm span}\left\{\prod^4_{s=1,s\neq j}(\lambda_s|_{F_j})^{\alpha_s},\sum^4_{s=1,s\neq j}\alpha_s=k\right\}.
\end{equation*}
Therefore, replacing $\lambda_i$  by $\lambda_j$ and the domain $F_j$ by $F_i$ in \eqref{faceeq2}, we have that for any $w_1\in P_{k}(F_i;\R)$
\begin{equation*}
 \int_{F_i}\big(\sum_{\alpha_\ell+\alpha_m+\alpha_j=k}C_{\alpha_\ell\alpha_m\alpha_j0}
  \lambda_\ell^{\alpha_\ell}\lambda_m^{\alpha_m}\lambda_j^{\alpha_j}+\lambda_j\sum_{\beta_\ell+\beta_m=k}D_{\beta_\ell\beta_m}
 \lambda_\ell^{\beta_\ell}\lambda_m^{\beta_m}\big)w_1\,ds=0.
\end{equation*}
By comparing  with \eqref{faceeq1}, we get that
\begin{equation}\label{keypoint}
  \int_{F_i}\sum_{\alpha_\ell+\alpha_m+\alpha_j=k}(C_{\alpha_\ell\alpha_m0\alpha_j}+C_{\alpha_\ell\alpha_m\alpha_j0})\lambda_\ell^{\alpha_\ell}\lambda_m^{\alpha_m}\lambda_j^{\alpha_j}w_1\,ds=0.
\end{equation}
Since $w_1\in P_k(F_i;\R)$ is arbitrary, this yields that
\begin{equation}\label{coff00}
C_{\alpha_\ell\alpha_m00}=0\text{ for any }\alpha_\ell+\alpha_m=k
\end{equation}
and
$$C_{\alpha_\ell\alpha_m0\alpha_j}=-C_{\alpha_\ell\alpha_m\alpha_j0}\text{ for any }\alpha_\ell+\alpha_m+\alpha_j=k\text{ with }\alpha_j\geq1.$$
Inserting \eqref{coff00} into \eqref{faceeq1} shows that
\begin{equation*}
   \int_{F_i}\lambda_j\bigg(\sum_{\substack{\alpha_\ell+\alpha_m+\alpha_j=k\\ \alpha_j\geq 1}}C_{\alpha_\ell\alpha_m0\alpha_j}\lambda_\ell^{\alpha_\ell}\lambda_m^{\alpha_m}\lambda_j^{\alpha_j-1}
   -\sum_{\beta_\ell+\beta_m=k}D_{\beta_\ell\beta_m}
 \lambda_\ell^{\beta_\ell}\lambda_m^{\beta_m}\bigg)w_1\,ds=0,\\
\end{equation*}
which implies
\begin{equation*}
 C_{\alpha_\ell\alpha_m0\alpha_j}=C_{\alpha_\ell\alpha_m\alpha_j0}=D_{\beta_\ell\beta_m}=0.
\end{equation*}
Consequently, we have
  \begin{equation*}
 p_{ij}=\lambda_i\lambda_j\sum_{\substack{\alpha_\ell+\alpha_m+\alpha_i+\alpha_j=k\\
 \alpha_i,\alpha_j\geq 1}}C_{\alpha_\ell\alpha_m\alpha_i\alpha_j}
 \lambda_\ell^{\alpha_\ell}\lambda_m^{\alpha_m}\lambda_i^{\alpha_i-1}\lambda_j^{\alpha_j-1}
 +\lambda_i\lambda_jp_1.
\end{equation*}
  Now, using the conditions in (2) and $p_1\in P_{k-1}(K;\R)$, we finally find that $p_{ij}=0$, which completes the proof.
\end{proof}
We define the nonconforming stress space for $k\geq 1$
\an{\lab{nonSh}
   \Sigma^{\rm nc}_{k,h}:=\Big\{~\tau
    &\in L^2(\Omega;\mathbb{S}) \ \Big| \ \tau|_K\in \Sigma^{\rm nc}_{k}(K)\text{ for any }K\in\cT_h, \text{ and the moments}\\
     & \text{of $\tau\nu$ up to degree $k$ are continuous across internal faces}\Big\}.
	\nonumber   }
\begin{remark}The shape function space $\Sigma_k^{\rm nc}(K)$ can be alternatively  defined, replacing $(\lambda_i-\lambda_j)\widetilde{P}_{k}(\lambda_\ell,\lambda_m)$  in \eqref{nonelementO} by $(c_1\lambda_i+c_2\lambda_j)\widetilde{P}_{k}(\lambda_\ell,\lambda_m)$ with any two different constants $c_1$ and $c_2$. This is valid since we can still get the key result \eqref{keypoint} in the proof of Theorem \ref{Thm:non}. When $c_1=0$ or $c_2=0$, we recover the reduced elements in \cite{Gopalakrishnan2011}.
\end{remark}
Our method finds $(\sigma_h,u_h)\in \Sigma^{\rm nc}_{k,h}\times V^{\rm nc}_{k,h}$ such that
\an{\left\{ \ad{
  &(A\sigma_h,\tau)+({\rm div}_h\tau,u_h)= 0 && \hbox{for all \ } \tau\in \Sigma^{\rm nc}_{k,h},\\
   &({\rm div}_h\sigma_h,v)= (f,v) && \hbox{for all \ } v\in V^{\rm nc}_{k,h}. }
   \right.\lab{diseqnNon}
}
Here ${\rm div}_h$ denotes the discrete counterpart of the divergence operator ${\rm div}$. We have the following error estimate as in \cite{Gopalakrishnan2011}.
\begin{theorem}\label{thm:non}
Suppose $k\geq1$ and $\sigma\in H^1(\Omega;\mathbb{S})$. Then for any $1\leq r\leq k$,
\begin{equation*}
  \|\sigma-\sigma_h\|_{0,\Omega}+\|u-u_h\|_{0,\Omega}\leq Ch^r\|u\|_{r+1,\Omega}.
\end{equation*}
Moreover, if full elliptic regularity \cite[(3.19)]{Gopalakrishnan2011} holds, then
\begin{equation*}
 \|u-u_h\|_{0,\Omega}\leq Ch^{r+1}\|u\|_{r+1,\Omega}.
\end{equation*}
\end{theorem}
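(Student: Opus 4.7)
The overall plan is to apply the standard error analysis for nonconforming mixed methods, broken into four stages: discrete stability, a commuting Fortin-type interpolant, a consistency error bound using the prescribed continuity of normal moments, and finally a duality argument for the sharper estimate on $u$.

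First, I would establish the discrete Brezzi conditions. The K-ellipticity on the kernel follows from the inclusion $\d_h\Sigma^{\rm nc}_{k,h}\subset V^{\rm nc}_{k,h}$: any $\tau_h$ in the discrete kernel satisfies $\d_h\tau_h=0$, whence $(A\tau_h,\tau_h)\ge C\|\tau_h\|_{0,\Omega}^2=C\|\tau_h\|^2_{H(\d_h,\Omega)}$. For the discrete inf-sup, I would imitate the two-level construction used in Lemma \ref{step1} and Lemma \ref{step2}: given $v_h\in V^{\rm nc}_{k,h}$, split it into its $L^2(K)$-orthogonal $RM(K)$ and $RM^\perp(K)$ components on each $K$. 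The bubble subspace $H_{K,k+1,b}\subset\Sigma_k^{\rm nc}(K)$ satisfies ${\rm div}\,H_{K,k+1,b}=RM^\perp(K)$ and takes care of the local orthogonal part element-by-element; the face degrees of freedom (1) supply a global correction whose divergence realizes the $RM(K)$ component, via a Scott--Zhang style correction of a continuous right inverse $\tau\in H^1(\Omega;\mathbb S)$ with $\d\tau=v_h$ and $\|\tau\|_{1,\Omega}\le C\|v_h\|_{0,\Omega}$.

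Second, I would define a Fortin-type interpolation $\Pi_h^{\rm nc}\colon H^1(\Omega;\mathbb S)\to\Sigma^{\rm nc}_{k,h}$ via the unisolvent degrees of freedom (1) and (2) from Theorem \ref{Thm:non}. Since the face moments of $\tau\nu$ up to degree $k$ and the bubble moments against $H_{K,k+1,b}$ are matched exactly, integration by parts yields the commuting property
\[
(\d_h(\sigma-\Pi_h^{\rm nc}\sigma),v_h)_K=\int_{\partial K}(\sigma-\Pi_h^{\rm nc}\sigma)\nu\cdot v_h\,ds-(\sigma-\Pi_h^{\rm nc}\sigma,\nabla v_h)_K=0
\]
for every $v_h\in V^{\rm nc}_{k,h}$ and every $K$. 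Standard Bramble--Hilbert/scaling arguments give $\|\sigma-\Pi_h^{\rm nc}\sigma\|_{0,\Omega}\le Ch^{r}\|\sigma\|_{r,\Omega}$ for $1\le r\le k$.

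Third, I would derive and bound the consistency error. Using the strong form $A\sigma=\epsilon(u)$, symmetric integration by parts on each $K$ gives, for $\tau_h\in\Sigma^{\rm nc}_{k,h}$,
\[
(A\sigma,\tau_h)+(\d_h\tau_h,u)=\sum_K\int_{\partial K}\tau_h\nu\cdot u\,ds=:E_h(u,\tau_h).
\]
Because the normal moments of $\tau_h\nu$ of degree $\le k$ are continuous across internal faces (and the analysis on boundary faces is standard), $u$ may be replaced on each face $F$ by $u-P_k^F u$, where $P_k^F$ is the $L^2(F)$ projection onto $P_k(F;\mathbb R^3)$. A trace inequality together with the approximation $\|u-P_k^Fu\|_{0,F}\le Ch^{r+1/2}\|u\|_{r+1,K}$ then yields $|E_h(u,\tau_h)|\le Ch^r\|u\|_{r+1,\Omega}\|\tau_h\|_{0,\Omega}$. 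Subtracting \meqref{eqn1} from \meqref{diseqnNon} produces the error equation
\[
(A(\sigma-\sigma_h),\tau_h)+(\d_h\tau_h,u-u_h)=E_h(u,\tau_h),\qquad (\d_h(\sigma-\sigma_h),v_h)=0,
\]
and inserting $\Pi_h^{\rm nc}\sigma$ and the $L^2$ projection $P_hu$ as intermediate terms, together with the discrete stability of Step 1 and the approximation estimates, delivers $\|\sigma-\sigma_h\|_{0,\Omega}+\|u-u_h\|_{0,\Omega}\le Ch^r\|u\|_{r+1,\Omega}$. The main obstacle here is keeping track of the nonconforming consistency cleanly — one has to verify that the $L^2(F)$-orthogonality of $\tau_h\nu$ jumps to $P_k(F;\mathbb R^3)$ genuinely absorbs $P_k^Fu$ despite $\tau_h$ being only piecewise defined, which is where the very definition of $\Sigma^{\rm nc}_{k,h}$ pays off.

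Finally, for the improved bound on $\|u-u_h\|_{0,\Omega}$ under full elliptic regularity, I would run the Aubin--Nitsche duality argument verbatim as in \cite{Gopalakrishnan2011}: let $(\phi,z)\in\Sigma\times V$ solve the adjoint problem with right-hand side $u-u_h$, use the regularity estimate $\|\phi\|_{2,\Omega}+\|z\|_{2,\Omega}\le C\|u-u_h\|_{0,\Omega}$, and exploit the commuting interpolant together with the consistency bound one order higher to gain the extra power of $h$. The delicate point is that the consistency error must be controlled in the negative norm corresponding to the duality, but this follows from the same face-moment continuity used above applied now to $z-P_k^F z$.
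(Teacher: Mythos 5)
Your overall architecture (discrete Brezzi conditions, commuting Fortin interpolant, consistency term via the face-moment continuity, Aubin--Nitsche for the improved $u$-estimate) is exactly the standard route, and it is the one the paper itself relies on: the paper gives no proof of Theorem \ref{thm:non} beyond the citation of \cite{Gopalakrishnan2011}. However, there is one concrete flaw in your Step 2. You claim that matching the face DOFs (1) and the interior DOFs (2) forces
\[
(\d_h(\sigma-\Pi_h^{\rm nc}\sigma),v_h)_K=\int_{\partial K}(\sigma-\Pi_h^{\rm nc}\sigma)\nu\cdot v_h\,ds-(\sigma-\Pi_h^{\rm nc}\sigma,\nabla v_h)_K=0 .
\]
The boundary term does vanish, since $v_h|_F\in P_k(F;\R^3)$. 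But the volume term equals $(\sigma-\Pi_h^{\rm nc}\sigma,\varepsilon(v_h))_K$ with $\varepsilon(v_h)$ the symmetric gradient, which ranges over all of $P_{k-1}(K;\mathbb S)$ as $v_h$ ranges over $P_k(K;\R^3)$, whereas the interior DOFs (2) only control moments against the bubble space $H_{K,k+1,b}=\sum\lambda_i\lambda_jP_{k-1}(K;\R)\bft_{i,j}\bft_{i,j}^T$, which is a proper subspace of $P_{k+1}(K;\mathbb S)$ not containing, e.g., nonzero constant tensors. So the canonical DOF-interpolant does not commute by this argument, and the step as written fails.

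The fix is the two-stage construction the paper already uses in the conforming case (Lemmas \ref{step1}, \ref{step2} and Remark \ref{Fortininte}), transplanted to the tetrahedron. First define $\Pi_h^1\tau$ by matching only the face DOFs (1) (these are well defined for $\tau\in H^1(\Omega;\mathbb S)$, so no Scott--Zhang regularization is needed here) and setting the interior DOFs to zero; then for every $w\in RM(K)$ one has $\varepsilon(w)=0$ and $w|_F\in P_1(F;\R^3)\subset P_k(F;\R^3)$, so $(\d(\tau-\Pi_h^1\tau),w)_K=\int_{\partial K}(\tau-\Pi_h^1\tau)\nu\cdot w\,ds=0$. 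Second, correct the $RM^\perp(K)$-moments by an interior bubble $\delta_K\in H_{K,k+1,b}$, using the surjectivity ${\rm div}\,H_{K,k+1,b}=RM^\perp(K)$ stated in Section \ref{sec:tetrahedral element}, with the norm of $\delta_K$ controlled by a scaling argument. The resulting $\Pi_h^{\rm nc}=\Pi_h^1+\sum_K\delta_K$ is bounded and satisfies $(\d_h(\tau-\Pi_h^{\rm nc}\tau),v_h)_K=0$ for all $v_h\in V_{k,h}^{\rm nc}$, after which your Steps 3 and 4 (consistency estimate from the degree-$k$ face-moment continuity, then duality) go through as you describe.
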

\subsection{Basis functions of the stress space for $k=1$}The degrees of freedom of $\Sigma^{\rm nc}_1(K)$ are 42. The six interior bubble functions on tetrahedron $K$ are as follows
\begin{equation*}
\lambda_i\lambda_j\bft_{i,j}\bft_{i,j}^T,\ 1\leq i<j\leq 4.
\end{equation*}
Below we give the basis functions associated with face $F_1$ as an example. The three vertices of $F_1$ are $\bx_2,\bx_3,\bx_4$ and $h_1$ denotes the height from $\bx_1$ to $F_1$. By direct computation, we obtain nine functions $\phi_{1,j}^{(s)}\in P_{1j}$ for $2\leq j\leq 4, 1\leq s\leq3$ that
\begin{eqnarray*}
  \phi_{1,2}^{(1)}&=&27\lambda_2-9\lambda_1-3(\lambda_3+\lambda_4)+30(\lambda_1-\lambda_2)(\lambda_3+\lambda_4),\\
  \phi_{1,2}^{(2)}&=&-15\lambda_2+9\lambda_1+9\lambda_3-3\lambda_4-60(\lambda_1-\lambda_2)\lambda_3,\\
  \phi_{1,2}^{(3)}&=&-15\lambda_2+9\lambda_1-3\lambda_3+9\lambda_4-60(\lambda_1-\lambda_2)\lambda_4,\\
  \phi_{1,3}^{(1)}&=&-15\lambda_3+9\lambda_1-3\lambda_4+9\lambda_2-60(\lambda_1-\lambda_3)\lambda_2,\\
  \phi_{1,3}^{(2)}&=&27\lambda_3-9\lambda_1-3(\lambda_4+\lambda_2)+30(\lambda_1-\lambda_3)(\lambda_4+\lambda_2),\\
  \phi_{1,3}^{(3)}&=&-15\lambda_3+9\lambda_1+9\lambda_4-3\lambda_2-60(\lambda_1-\lambda_3)\lambda_4,\\
  \phi_{1,4}^{(1)}&=&-15\lambda_4+9\lambda_1+9\lambda_2-3\lambda_3-60(\lambda_1-\lambda_4)\lambda_2,\\
  \phi_{1,4}^{(2)}&=&-15\lambda_4+9\lambda_1-3\lambda_2+9\lambda_3-60(\lambda_1-\lambda_4)\lambda_3,\\
  \phi_{1,4}^{(3)}&=&27\lambda_4-9\lambda_1-3(\lambda_2+\lambda_3)+30(\lambda_1-\lambda_4)(\lambda_2+\lambda_3),
\end{eqnarray*}
which satisfy
\begin{equation}\label{basP1}
\int_{F_j}\phi_{1,j}^{(s)}w\,ds=0\text{ for any }w\in P_1(F_j;\R)
\end{equation}
and for any $1\leq r\leq 3$
\begin{equation*}
\frac{1}{2|F_1|}\int_{F_1}\phi_{1,j}^{(s)}\lambda_{r+1}\,ds=\delta_{s,r}:=\begin{cases}
1&\text{ if }s=r,\\
0&\text{ otherwise}.
\end{cases}
\end{equation*}
Define $\Phi_{1,j}^{(s)}=\phi_{1,j}^{(s)}\bft_{1,j}\bft_{1,j}^T$. It follows from \eqref{basP1} that
\begin{equation*}
  \int_{F_r}\Phi_{1,j}^{(s)}\nu\cdot v\,ds=0\text{ for any }v\in P_1(F_r;\R^3)\text{ and }2\leq r\leq 4.
\end{equation*}

We want to establish the global basis across element interfaces. To this end, we find three arbitrary linear independent vectors $\bfn_1,\bfn_2$ and $\bfn_3$ associated with face $F_1$. Let
$\{\Phi_\ell\}_{\ell=1}^9$ denote the collection of the nine matrix-valued functions $\Phi_{1,j}^{(s)}$ and
$$\{q_\ell\}_{\ell=1}^9=\{\lambda_2\bfn_1,\lambda_3\bfn_1,\lambda_4\bfn_1,\lambda_2\bfn_2,\lambda_3\bfn_2,
\lambda_4\bfn_2,\lambda_2\bfn_3,\lambda_3\bfn_3,\lambda_4\bfn_3\}.$$
Define the matrix $H=(h_{\ell m})\in\R^{9\times9}$, where
\begin{equation*}
  h_{\ell m}=\int_{F_1}q_\ell\cdot(\Phi_m\nu)\,ds,\ 1\leq \ell,m\leq 9.
\end{equation*}
We write down the explicit expression of $H$. For example $h_{11}=\int_{F_1}\lambda_2\bfn_1\cdot(\Phi_{1,2}^{(1)}\nu)\,ds=2h_1|F_1|\bft_{1,2}\cdot\bfn_1=6|K|\bft_{1,2}\cdot\bfn_1$, then
\begin{equation*}
 H=6|K|\begin{pmatrix}
     (\bft_{1,2}\cdot\bfn_1) \delta& (\bft_{1,3}\cdot\bfn_1) \delta  & (\bft_{1,4}\cdot\bfn_1)\delta\\
     (\bft_{1,2}\cdot\bfn_2) \delta& (\bft_{1,3}\cdot\bfn_2)\delta & (\bft_{1,4}\cdot\bfn_2) \delta\\
     (\bft_{1,2}\cdot\bfn_3)\delta& (\bft_{1,3}\cdot\bfn_3) \delta & (\bft_{1,4}\cdot\bfn_3) \delta \\
   \end{pmatrix}.
\end{equation*}
Here $\delta\in\R^{3\times3}$ denotes the identity matrix. Note that $\bft_{1,j}(2\leq j\leq 4)$ are linearly independent, we have
\begin{equation}\label{mainv}
 S=(s_{ij})=\begin{pmatrix}
     \bft_{1,2}\cdot\bfn_1 & \bft_{1,3}\cdot\bfn_1   & \bft_{1,4}\cdot\bfn_1  \\
     \bft_{1,2}\cdot\bfn_2& \bft_{1,3}\cdot\bfn_2 & \bft_{1,4}\cdot\bfn_2 \\
     \bft_{1,2}\cdot\bfn_3& \bft_{1,3}\cdot\bfn_3  & \bft_{1,4}\cdot\bfn_3\\
   \end{pmatrix}^{-1},
\end{equation}
which means that we transform $\bft_{1,2},\bft_{1,3},\bft_{1,4}$ to $\bfn_1,\bfn_2,\bfn_3$. Further, it can be checked that the inverse of $H$ is as follows
\begin{equation*}
 H^{-1}=(b_{\ell m})=\frac{1}{6|K|}\begin{pmatrix}
     s_{11} \delta&  s_{12} \delta  &  s_{13} \delta\\
      s_{21} \delta&  s_{22} \delta  &  s_{23}\delta \\
      s_{31} \delta&  s_{32} \delta&  s_{33} \delta \\
   \end{pmatrix}.
\end{equation*}
 Consequently, the desired nine basis functions associated with $F_1$ are
 \begin{equation}\label{basex}
 \Psi_m=\sum^{9}_{\ell=1}b_{\ell m}\Phi_\ell,\ 1\leq m\leq 9,
\end{equation}
satisfying
 \begin{equation*}
\int_{F_1}q_\ell\cdot(\Psi_m\nu)\,ds=\delta_{\ell,m},\ 1\leq \ell\leq 9.
\end{equation*}

For any two elements $K$ and $K'$ which share the common face $F_1$, vectors $\bfn_i(1\leq i\leq 3)$ associated with $F_1$ can be chosen as the dual basis of tangent vectors $\bft_{1,2}$, $\bft_{1,3}$ and  $\bft_{1,4}$ of $K$. In this case, on element $K$, we have $S=\delta$ and thus
\begin{equation*}
H^{-1}=\frac{1}{6|K|}\begin{pmatrix}
    \delta & 0   & 0\\
     0& \delta & 0 \\
     0& 0  & \delta\\
   \end{pmatrix}.
\end{equation*}
Hence the basis functions $\Psi_m$ associated with $F_1$ on element $K$ are exactly $\frac{1}{6|K|}\Phi_m$.
In conclusion, to get the global basis functions, we first need to find nine functions $\phi_{i,j}^{(s)}(j\neq i)$ for face $F_i$ as above, which are defined in barycentric coordinates and independent of all elements. Then, we need to compute the inverse of the $3\times 3$ matrix  as in \eqref{mainv} for face $F_i$ on each element. The basis functions follow from \eqref{basex}.
\subsection{The nonconforming mixed triangular elements}Following similar arguments, we construct the nonconforming stress space on triangular meshes. In this subsection, we suppose  domain $\Omega\subset\R^2$ is subdivided by a family of shape regular triangular meshes $\cT_h$. Notations here are similar to those in the three dimensional case. Given triangle $K\in\cT_h$,  we define
the shape function space of order $k\geq 1$
\begin{equation*}
  \Sigma^{\rm nc}_k(K):=\{\tau=\sum_{1\leq i<j\leq 3}p_{ij}\bft_{i,j}\bft_{i,j}^T\ |\ p_{ij}\in P_{ij}\},
\end{equation*}
where
\begin{equation*}
\begin{split}
 P_{ij}:
 =&P_{k}(K;\R)+{\rm span}\{(\lambda_i-\lambda_j)\lambda_\ell^k\}+\lambda_i\lambda_jP_{k-1}(K;\R)
 \end{split}
\end{equation*}
and $\{\ell\}=\{1,2,3\}\backslash\{i,j\}$.
The degrees of freedom are as follows:
\begin{itemize}
  \item[(1)] $\int_e\tau\nu\cdot v\,ds$ for any $v\in P_{k}(e;\R^2)$ and edge $e\subset \partial K$,
  \item[(2)] $\int_{K}\tau:p\,dx$ for any $p\in H_{K,k+1,b}$,
  where
   \begin{equation*}
    H_{K,k+1,b}=\sum_{1\leq i<j\leq 3}\lam_i\lam_jP_{k-1}(K;\R)\bft_{i,j}\bft_{i,j}^T.
    \end{equation*}
\end{itemize}
The proof of  unisolvence is similar as in Theorem \ref{Thm:non}. Then we define the stress space for $k\geq 1$
\an{\nonumber
   \Sigma^{\rm nc}_{k,h}:=\Big\{&~\tau
    \in L^2(\Omega;\mathbb{S}_2) \ \Big| \ \tau|_{K}\in \Sigma^{\rm nc}_{k}(K)\text{ for any }K\in\cT_h, \text{ and the moments}\\
     & \text{ of $\tau\nu$ up to degree $k$ are continuous across internal edges}\Big\}.
	\nonumber   }

Below we give the basis functions of $\Sigma^{\rm nc}_1(K)$. The three interior bubble functions are
\begin{equation*}
\lambda_i\lambda_j\bft_{i,j}\bft_{i,j}^T,\ 1\leq i<j\leq 3.
\end{equation*}
 We give the basis functions associated with edge $e_1$ as an example. The two vertices of $e_1$ are $\bx_2,\bx_3$. By direct computation, we obtain four functions $\phi_{1,j}^{(s)}\in P_{1j}$ for $2\leq j\leq 3, 1\leq s\leq2$ that
\begin{eqnarray*}
  \phi_{1,2}^{(1)}&=&5\lambda_2-\lambda_1-\lambda_3+6(\lambda_1-\lambda_2)\lambda_3,\\
  \phi_{1,2}^{(2)}&=&-4\lambda_2+2\lambda_1+2\lambda_3-12(\lambda_1-\lambda_2)\lambda_3,\\
  \phi_{1,3}^{(1)}&=&-4\lambda_3+2\lambda_1+2\lambda_2-12(\lambda_1-\lambda_3)\lambda_2,\\
  \phi_{1,3}^{(2)}&=&5\lambda_3-\lambda_1-\lambda_2+6(\lambda_1-\lambda_3)\lambda_2,
  \end{eqnarray*}
  which satisfy
\begin{equation*}
\int_{e_j}\phi_{1,j}^{(s)}w\,ds=0\text{ for any }w\in P_1(e_j;\R)
\end{equation*}
and for any $1\leq r\leq 2$
\begin{equation*}
\frac{1}{|e_1|}\int_{e_1}\phi_{1,j}^{(s)}\lambda_{r+1}\,ds=\delta_{s,r}.
\end{equation*}
Here $e_j$ denotes the edge of $K$ opposite vertex $x_j$. Define $\Phi_{1,j}^{(s)}=\phi_{1,j}^{(s)}\bft_{1,j}\bft_{1,j}^T$.
We find two arbitrary linear independent vectors $\bfn_1$ and $\bfn_2$ associated with edge $e_1$. Let
$\{\Phi_\ell\}_{\ell=1}^4$ denote the collection of the four matrix-valued functions $\Phi_{1,j}^{(s)}$ and
$\{q_\ell\}_{\ell=1}^4=\{\lambda_2\bfn_1,\lambda_3\bfn_1,\lambda_2\bfn_2,\lambda_3\bfn_2\}$.
Define the matrix $H=(h_{\ell m})\in\R^{4\times4}$, where
\begin{equation*}
  h_{\ell m}=\int_{F_1}q_\ell\cdot(\Phi_m\nu)\,ds,\ 1\leq \ell,m\leq 4.
\end{equation*}
We write down the explicit expression of $H$ that
\begin{equation*}
 H=2|K|\begin{pmatrix}
     (\bft_{1,2}\cdot\bfn_1) \delta& (\bft_{1,3}\cdot\bfn_1)\delta \\
     (\bft_{1,2}\cdot\bfn_2) \delta& (\bft_{1,3}\cdot\bfn_2)\delta \\
   \end{pmatrix}.
\end{equation*}
Here $\delta\in\R^{2\times2}$ denotes the identity matrix. Note that $\bft_{1,j}(2\leq j\leq 3)$ are linearly independent, we have
\begin{equation*}
 S=(s_{ij})=\begin{pmatrix}
     \bft_{1,2}\cdot\bfn_1 & \bft_{1,3}\cdot\bfn_1    \\
     \bft_{1,2}\cdot\bfn_2& \bft_{1,3}\cdot\bfn_2 \\
   \end{pmatrix}^{-1},
\end{equation*}
which means that we transform $\bft_{1,2},\bft_{1,3}$ to $\bfn_1,\bfn_2$. Further,  the inverse of $H$ is as follows
\begin{equation*}
 H^{-1}=(b_{\ell m})=\frac{1}{2|K|}\begin{pmatrix}
     s_{11} \delta&  s_{12} \delta  \\
      s_{21} \delta&  s_{22} \delta \\
   \end{pmatrix}.
\end{equation*}
 Consequently, the desired four basis functions associated with $e_1$ are
 \begin{equation*}
 \Psi_m=\sum^{4}_{\ell=1}b_{\ell m}\Phi_\ell,\ 1\leq m\leq 4,
\end{equation*}
satisfying
 \begin{equation*}
\int_{e_1}q_\ell\cdot(\Psi_m\nu)\,ds=\delta_{\ell,m},\ 1\leq \ell\leq 4.
\end{equation*}
As mentioned in the three dimensional case, $\bfn_1$ and $\bfn_2$ can be chosen as the dual basis of tangent vectors $\bft_{1,2}$ and $\bft_{1,3}$ of some element that share $e_1$.
\section{Numerical results}
\label{sec:numerical}
We compute one example in 3D, by the lowest order case of the two family elements, respectively.
It is a pure displacement problem  on the unit cube
   $\Omega=(0,1)^3$ with a homogeneous boundary condition
         that $u\equiv 0$ on $\partial\Omega$.
In the computation, let
   \a{
      A \sigma &= \frac 1{2\mu} \left(
       \sigma - \frac{\lambda}{2\mu + 3\lambda} \operatorname{tr}(\sigma)
        \delta \right),  }
  where $\delta=\p{1 &0&0\\0&1&0\\0&0&1}$, and $\mu=1/2$ and $\lambda=1$ are the
    Lam\'e constants.

Let  the  exact solution on the unit square $[0,1]^3$ be
   \e{\lab{e1}   u= \p{2^4 \\2^5 \\ 2^6 } x(1-x)y(1-y)z(1-z). }
Then, the true stress function $\sigma$
     and the load function $f$ are defined by the equations in
    \meqref{eqn1},   for the given  solution $u$.

 \subsection{The mixed triangular prism element}We use the mixed triangular prism element of $k=1$ in Section \ref{sec:finite element}. In the computation, each mesh is refined into a half-sized mesh uniformly, see the initial mesh in Figure \ref{grid}.
In Table \mref{b1}, the errors and the convergence order
   in various norms are listed  for the true solution \meqref{e1},
   by the mixed finite element in \meqref{stressspace} and \meqref{Vh}, with
    $k=1$ there.
The optimal order of convergence is achieved in Table \mref{b1},
   coinciding with   Theorem \mref{MainError}.
   \subsection{The nonconforming mixed tetrahedral element}We compute the example on tetrahedral meshes by the lowest order nonconforming mixed element. The computational results are listed in Table \ref{b2}, which verifies Theorem \ref{thm:non}.
   \begin{table}[!ht]
  \caption{ The error and the order of convergence by the mixed triangular prism element,
    $k=1$ in \meqref{stressspace} and \eqref{Vh}, for \meqref{e1}.}
\lab{b1}
\begin{center}  \begin{tabular}{c|cc|cc|cc}  
\hline &  $ \|\sigma -\sigma_h\|_{0,\Omega}$ & $h^n$  & $ \|u- u_h\|_{0,\Omega}$ &$h^n$ &
    $ \|\d(\sigma -\sigma_h)\|_{0,\Omega}$ & $h^n$  \\ \hline
 1&    1.61682569&0.0&    0.21093411&0.0&    6.10467990&0.0\\
 2&     0.48388087&1.74&    0.06461602&1.71&    1.74304423&1.81\\
 3&     0.12795918&1.92&    0.01699145&1.92&    0.45537323&1.94\\
 4&     0.03244990&1.98&     0.00429655&1.98&     0.11514501&1.98\\
 5&     0.00814520&1.99&     0.00429654&1.99&     0.02886873&1.99\\\hline
\end{tabular}\end{center} \end{table}

   \begin{table}[!ht]
  \caption{ The error and the order of convergence by the nonconforming mixed tetrahedral element,
    $k=1$ in \eqref{nondisplace} and  \meqref{nonelement}, for \meqref{e1}.}
\lab{b2}
\begin{center}  \begin{tabular}{c|cc|cc|cc}  
\hline &  $ \|\sigma -\sigma_h\|_{0,\Omega}$ & $h^n$  & $ \|u- u_h\|_{0,\Omega}$ &$h^n$ &
    $ \|\d(\sigma -\sigma_h)\|_{0,\Omega}$ & $h^n$  \\ \hline
 1&    1.56676383&0.0&    0.28991289&0.0&    9.20147348&0.0\\
 2&     0.78169912&1.00&    0.09157813&1.66&    2.89615493&1.67\\
 3&     0.34907155&1.16&    0.02569030&1.83&    0.77454646&1.90\\
 4&     0.16459839&1.08&    0.00660946&1.96&    0.19693780&1.97\\
 5&     0.08060083&1.03&    0.00166329&1.99&    0.04944284&1.99\\ \hline
 \end{tabular}\end{center} \end{table}
 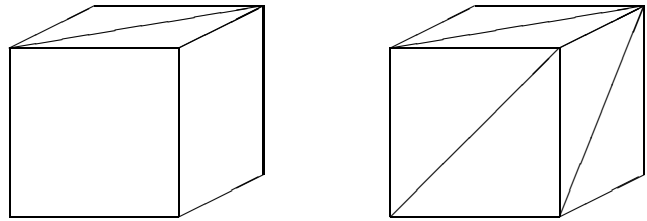
\begin{figure}[htb] \setlength\unitlength{0.8pt}
  \begin{center}  \begin{picture}(320,100)(0,0)
 \put(0,0){\begin{picture}(100,100)(0,0)
     \multiput(0,0)(80,0){2}{\line(0,1){80}}     \multiput(0,0)(0,80){2}{\line(1,0){80}}
    \multiput(80,0)(0,80){2}{\line(2,1){40}}    \multiput(0,80)(80,0){2}{\line(2,1){40}}
    \multiput(80,0)(40,20){2}{\line(0,1){80}}    \multiput(0,80)(40,20){2}{\line(1,0){80}}
      \multiput(0,80)(80,0){1}{\line(6,1){120}}
   \end{picture} }
 \put(180,0){\begin{picture}(100,100)(0,0)
     \multiput(0,0)(80,0){2}{\line(0,1){80}}     \multiput(0,0)(0,80){2}{\line(1,0){80}}
    \multiput(80,0)(0,80){2}{\line(2,1){40}}    \multiput(0,80)(80,0){2}{\line(2,1){40}}
    \multiput(80,0)(40,20){2}{\line(0,1){80}}    \multiput(0,80)(40,20){2}{\line(1,0){80}}
    \multiput(0,0)(40,0){1}{\line(1,1){80}}
    \multiput(80,0)(0,80){1}{\line(2,5){40}}    \multiput(0,80)(80,0){1}{\line(6,1){120}}
   \end{picture} }
    \end{picture} \end{center}
\caption{ \lab{grid} The initial meshes for
 the triangular prism and tetrahedral partitions, respectively.}
\end{figure}

\end{document}